\newtheorem{theorem}{Theorem}
\newtheorem{lemma}[theorem]{Lemma}
\newtheorem{corollary}[theorem]{Corollary}
\newtheorem{proposition}{Proposition}
\DeclareMathOperator*{\argmax}{arg\,max}
\DeclareMathOperator*{\argmin}{arg\,min}
\author{Helena Bergold\affiliationmark{1}
	\and 
	Winfried Hochstättler\affiliationmark{2}
	\and 
	Uwe Mayer\affiliationmark{2}}
\title[The Neighborhood Polynomial of Chordal Graphs]{The Neighborhood Polynomial \\ of Chordal Graphs\thanks{A short version of this paper appeared in the Proceedings of the 17th Algorithms and Data Structures Symposium (WADS 2021), see \cite{WADS}. \goodbreak 
		The authors thank Kolja Knauer and Manfred Scheucher for helpful discussions and the anonymous reviewers for helpful comments. \goodbreak
		Helena Bergold was supported by DFG-GRK 2434.}}
\affiliation{
	Freie Universit\"at Berlin, Department of Computer Science, 
	Germany  \\
	FernUniversit\"at in Hagen, Fakult\"at f\"ur Mathematik und Informatik,
	Germany}
\keywords{neighborhood polynomial \and domination polynomial \and chordal graph \and comparability graph \and leafage \and anchor width}
\begin{document}
	\publicationdetails{24}{2022}{1}{19}{8388}
	\maketitle
	\begin{abstract}
		We study the neighborhood polynomial and the complexity of
		its computation for chordal graphs.
		The neighborhood polynomial of a graph is the generating function
		of subsets of its vertices that have a common neighbor.
		We introduce a parameter for chordal graphs called
		anchor width
		and an algorithm to compute the neighborhood polynomial which runs in
		polynomial time if the anchor width is polynomially bounded. The anchor width is the maximal number of different sub-cliques of a clique which appear as a common neighborhood.
		Furthermore we study the anchor width for chordal graphs and some subclasses such as chordal comparability graphs and chordal graphs with bounded leafage. The leafage of a chordal graphs is the minimum number of leaves in the host tree of a subtree representation. We show that the anchor width of a chordal graph is at most $n^{\ell}$ where $\ell$ denotes the leafage. 
		This shows that for some subclasses computing the neighborhood polynomial is possible in polynomial time while it is \NP-hard for general chordal graphs.
	\end{abstract}

	\section{Introduction}
	\label{sec:Introduction}
	
	In this paper we study the neighborhood polynomial of graphs and give an algorithm to compute the polynomial for chordal graphs in polynomial time for some subclasses. 
	Throughout the paper, all graphs are simple, finite and undirected.
	For a graph $G = (V,E)$, the \emph{neighborhood} of a vertex $v \in V$ is the set of all adjacent vertices, denoted by $N_G(v) = \{u \in V \mid uv \in E \}$. 
	The \emph{neighborhood complex} of a graph $G$, first introduced by \cite{Lovasz1978}, consists of all subsets of vertices $W \subseteq V$ which have a common neighbor, that is 
	\[ 
		 \mathcal{N}_G = \{U \subseteq V | \: \exists v \in V: U \subseteq N_G(v) \}.
	\]
	This set-system is clearly hereditary and hence it is a simplicial complex.
	To count the number of sets with cardinality $k$ in $\mathcal{N}_G$, we define the \emph{neighborhood polynomial}
	\[
		N_G(x) = \sum_{U \in \;\mathcal{N}_G} x^{|U|},
	\]
	which is the generating function of the neighborhood complex $\mathcal{N}_G$.
	Since we only consider finite graphs, the sum is finite and $N_G(x)$ is a polynomial such as all other generating functions considered in this paper.  
	We investigate the complexity of computing the neighborhood polynomial of some graph classes. In particular, we look at chordal graphs and subclasses like interval graphs, split graphs and chordal comparability graphs. 
	In order to do this, we introduce the \emph{anchor width} of a graph and develop an algorithm for computing the neighborhood polynomial in Section~\ref{sec:anchorDegree}. We will see that the anchor width is the essential parameter for a polynomial runtime of our algorithm. If for any subclass of chordal graphs the anchor width is polynomially bounded in the number of vertices, our algorithm is efficient. In particular our main result is the following theorem.
	\begin{theorem} \label{thm:runtime}
		Let $G$ be a chordal graph with $n$ vertices and anchor width $k$. Computing the neighborhood polynomial takes at most $\mathcal{O}(n^3 k + n^2k^2)$ time. 
	\end{theorem}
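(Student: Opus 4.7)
The strategy is a sweep over the clique tree of $G$ that accumulates the polynomial $N_G$ one maximal clique at a time, charging each simplex $U\in\mathcal{N}_G$ to a canonical pair (maximal clique, distinct sub-clique of it that appears as a common neighborhood). First, compute in linear time a perfect elimination ordering together with a clique tree of $G$, with at most $m\le n$ maximal cliques $C_1,\ldots,C_m$. By chordality, for every $U\in\mathcal{N}_G$ the set of common neighbors sits inside some maximal clique, which lets us associate to $U$ a root-most owning clique $C(U)$ and, by a fixed tie-breaking rule, a distinguished sub-clique $S\in\mathcal{S}_{C(U)}$, where $\mathcal{S}_C:=\{N_G(v)\subseteq C : v\in V\}$ has size at most $k$ by the anchor-width assumption.

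The algorithm has two phases. In Phase~1 I build the lists $\mathcal{S}_{C_i}$ for every maximal clique: scanning each of the $n$ neighborhoods against each clique, deciding inclusion in $\mathcal{O}(n)$ time and then testing against the length-$\le k$ bucket, costs $\mathcal{O}(n^2 k)$ per clique and $\mathcal{O}(n^3 k)$ overall. In Phase~2, for each $C_i$ I compute the generating polynomial of those $U$ attributed to $C_i$: using an ordering of $\mathcal{S}_{C_i}$ and a signed combination over pairwise overlaps of the at most $k$ sub-cliques yields $\mathcal{O}(k^2)$ pairwise operations, each of cost $\mathcal{O}(n)$ for intersecting sets of size at most $n$, giving $\mathcal{O}(n k^2)$ per clique and $\mathcal{O}(n^2 k^2)$ total. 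Summing the two phases produces the claimed bound $\mathcal{O}(n^3 k + n^2 k^2)$.

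The main obstacle, I expect, is the analysis of Phase~2: showing that the inclusion--exclusion over the $\le k$ distinguished sub-cliques per clique actually collapses to pairwise-intersection computations, avoiding a naive $2^k$ blow-up, and that the canonical ordering partitions $\mathcal{N}_G$ so that every simplex is charged exactly once. Correctness of the partition should rest on chordality together with the standard clique-tree property that the cliques containing any given vertex form a subtree, while the pairwise-only structure of the inclusion--exclusion should follow from an ordering of the members of $\mathcal{S}_C$ (for instance by PEO position of a witnessing vertex) that controls their overlap pattern inside $C$. Once these two structural points are in place, the overall complexity bound follows by adding the per-phase costs.
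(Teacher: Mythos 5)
Your proposal has a genuine gap, and in fact two of its load-bearing claims do not hold. First, the charging scheme rests on the assertion that, by chordality, for every $U\in\mathcal{N}_G$ the set of common neighbors of $U$ lies inside some maximal clique. This is false: in the diamond ($K_4$ minus the edge $vw$, with $x\sim y$), the set $U=\{x,y\}$ has common neighborhood $\{v,w\}$, which is not a clique; similarly, for a single vertex $u$ the common neighborhood is $N_G(u)$, which is a clique only when $u$ is simplicial. So the ``root-most owning clique'' map, and with it the claim that your per-clique polynomials partition $\mathcal{N}_G$, is not established. Second, Phase~2 is exactly the hard part and you leave it unproved: you only store the single-vertex traces $N_G(v)\cap C$ (which are indeed anchor sets, hence at most $k$ of them), but counting the union of the downward-closed families they generate by inclusion--exclusion requires the \emph{higher-order} intersections $N_G^{\cap}(W)\cap C$, and a signed combination using only pairwise overlaps is not exact in general. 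You flag this yourself as ``the main obstacle,'' which is an accurate self-assessment: without a proof that the overlap pattern collapses to pairwise terms (no such structural fact is known or used in the paper), the $\mathcal{O}(nk^2)$ bound per clique is unsupported, and the natural fallback is a $2^{|C|}$ blow-up.

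The paper takes a different and more careful route that you may want to compare against. It never partitions $\mathcal{N}_G$ globally over maximal cliques; instead it rebuilds $G$ by attaching vertices in reverse perfect elimination order and uses an exact update formula for attaching a vertex to a clique $C$ (Proposition~\ref{cor:AttachmentClique}): the new simplices are $C$ itself (if $C$ was maximal) and the sets $\{v\}\cup Y$ with $Y\in\mathcal{N}_G(C)$, whose generating function is
\[
\sum_{A\in\mathcal{A}_G(C)}P_G(A,C,x)\left((1+x)^{|C|}-x^{|A|}(1+x)^{|C|-|A|}\right),
\]
where the periphery polynomials $P_G(A,C,x)$ record, for each of the at most $k$ anchor sets $A$, the generating function of all periphery sets whose common neighborhood in $C$ is exactly $A$. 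This is precisely the compressed encoding of the higher-order intersection data that your pairwise scheme is missing, and the anchor width $k$ bounds the number of such classes. The algorithm then maintains $\mathcal{A}_{G}(C_{max})$ and the polynomials $P_G(A,C_{max},x)$ for the at most $n$ maximal cliques under each vertex attachment (the three update cases in Section~\ref{sec:anchorDegree}, together with Lemma~\ref{lem:InclusionClique} to pass from a maximal clique to the attachment clique), and the runtime $\mathcal{O}(n^3k+n^2k^2)$ comes from $n$ attachment steps each costing $\mathcal{O}(n^2+nk^2+n^2k/n\cdot n)$-type contributions, not from a two-phase global sweep. To repair your argument you would need either a proof of the pairwise-collapse claim (unlikely to hold) or to introduce, as the paper does, per-clique data indexed by all anchor sets together with their generating functions and a correct mechanism (incremental or otherwise) that avoids double counting across witnesses.
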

	In Section~\ref{sec:AnchorWidth} we investigate the complexity of the anchor width for different subclasses.
	For this we look at chordal graphs with bounded leafage. The leafage $\ell(G)$ was introduced in~\cite{LinMcKeeWest1998} and measures how close a chordal graph is to an interval graph. 
	We show that a chordal graph~$G$ on $n$ vertices has anchor width at most $n^{\ell(G)}$ (cf. Theorem~\ref{thm:leafage}). Furthermore for interval graphs, which are the graphs with leafage at most two, we give a family with quadratic anchor width. 
	Another result of Section~\ref{sec:AnchorWidth} is that chordal comparability graphs have linearly bounded anchor width.

	\section{Preliminaries}
	\label{sec:Preliminaries}
	
	The neighborhood polynomial was introduced by \cite{BrownNowakowski2008} who investigated the effect of some elementary graph operations on the neighborhood polynomial. Given two graphs $G_1=(V_1,E_1)$ and $G_2 = (V_2,E_2)$ on disjoint vertex sets, the \emph{union} $G_1 \cup G_2$ of the graphs is the graph on the vertex set $V_1 \cup V_2$ with edge set $E_1 \cup E_2$. 
	The \emph{join} $G_1 +G_2$ of the two graphs is the graph on the vertex set $V_1 \cup V_2$ consisting of both graphs together with all possible edges between vertices in $V_1$ and vertices in $V_2$, that is $E = E_1 \cup E_2 \cup \{ v_1v_2 \mid  v_1 \in V_1, v_2 \in V_2\}$.
	
	\begin{proposition}[\cite{BrownNowakowski2008}]
		\label{prop:DisjointUnion}
		Let $G_1$ and $G_2$ be two graphs on disjoint vertex sets.
		Then the neighborhood polynomial of the disjoint union $G_1 \cup G_2$ is
		\[
		N_{G_1 \cup G_2}(x) = N_{G_1}(x) + N_{G_2}(x) -1 .
		\]
	\end{proposition}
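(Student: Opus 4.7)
The plan is to use an inclusion-exclusion argument at the level of the neighborhood complex and then translate the set-theoretic identity into an identity of generating functions. Concretely, I would first show the set equality
\[
\mathcal{N}_{G_1 \cup G_2} \;=\; \mathcal{N}_{G_1} \cup \mathcal{N}_{G_2},
\]
and then determine the intersection $\mathcal{N}_{G_1} \cap \mathcal{N}_{G_2}$.

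For the first equality, I would use that since $V_1 \cap V_2 = \emptyset$, every vertex $v$ of $G_1 \cup G_2$ lies in exactly one of the $V_i$, and in that case $N_{G_1 \cup G_2}(v) = N_{G_i}(v)$. Hence a subset $U \subseteq V_1 \cup V_2$ has a common neighbor in $G_1 \cup G_2$ if and only if it has a common neighbor in one of the two graphs, which gives both inclusions. Note in particular that any non-empty $U \in \mathcal{N}_{G_1 \cup G_2}$ is forced to lie entirely in $V_1$ or entirely in $V_2$, since a common neighbor in $V_i$ only has neighbors in $V_i$.

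For the intersection, I would observe that any $U \in \mathcal{N}_{G_1} \cap \mathcal{N}_{G_2}$ satisfies $U \subseteq V_1 \cap V_2 = \emptyset$, so $U = \emptyset$. Conversely, assuming $V_1, V_2 \neq \emptyset$, the empty set belongs to both complexes (as a subset of any neighborhood). Thus $\mathcal{N}_{G_1} \cap \mathcal{N}_{G_2} = \{\emptyset\}$.

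Finally, translating to generating functions, every $U \in \mathcal{N}_{G_1 \cup G_2}$ contributes $x^{|U|}$ to $N_{G_1 \cup G_2}(x)$, and summing over $\mathcal{N}_{G_1} \cup \mathcal{N}_{G_2}$ via inclusion–exclusion yields $N_{G_1}(x) + N_{G_2}(x)$ minus the contribution of $\emptyset$, which is $x^0 = 1$. This gives exactly the claimed identity. There is no real obstacle here; the only subtlety worth flagging is the degenerate case where one of the $V_i$ is empty, which can be handled separately (or absorbed by the convention $N_G(x) = 1$ for $G$ on the empty vertex set, corresponding to $\mathcal{N}_G = \{\emptyset\}$).
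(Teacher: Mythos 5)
Your argument is correct: since the paper only cites this result from Brown and Nowakowski without reproducing a proof, there is nothing to diverge from, and your decomposition $\mathcal{N}_{G_1 \cup G_2} = \mathcal{N}_{G_1} \cup \mathcal{N}_{G_2}$ with $\mathcal{N}_{G_1} \cap \mathcal{N}_{G_2} = \{\emptyset\}$ followed by inclusion--exclusion is exactly the standard argument. Your remark on the degenerate case (the formula presupposes both vertex sets are non-empty, consistent with the paper's convention $N_{K_1}(x)=1$, i.e.\ constant term $1$ coming from $\emptyset$) is a sensible and correct caveat.
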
 
	
	\begin{proposition}[\cite{BrownNowakowski2008}]
		\label{prop:Join}
		Let $G_1 = (V_1,E_1), G_2 = (V_2,E_2)$ be two graphs on disjoint vertex sets. 
		Then the neighborhood polynomial of the join $G_1 + G_2$ is
		\[
		N_{G_1+ G_2}(x) = (1+x)^{|V_2|}N_{G_1}(x) + (1+x)^{|V_1|}N_{G_2}(x)- N_{G_1}(x )N_{G_2}(x).
		\]
	\end{proposition}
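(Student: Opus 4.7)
The plan is to describe $\mathcal{N}_{G_1+G_2}$ explicitly in terms of $\mathcal{N}_{G_1}$ and $\mathcal{N}_{G_2}$, and then read off the generating function via inclusion--exclusion.

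First I would analyze how neighborhoods behave under the join. Since every vertex of $V_1$ is adjacent to every vertex of $V_2$, for $v \in V_1$ we have $N_{G_1+G_2}(v) = N_{G_1}(v) \cup V_2$, and symmetrically for $v \in V_2$. Writing any $U \subseteq V_1 \cup V_2$ uniquely as $U = U_1 \cup U_2$ with $U_i = U \cap V_i$, this gives the key equivalence: $U \in \mathcal{N}_{G_1+G_2}$ iff at least one of the following holds:
\emph{(a)} $U_1 \in \mathcal{N}_{G_1}$ (witnessed by some $v \in V_1$), with $U_2 \subseteq V_2$ arbitrary; or
\emph{(b)} $U_2 \in \mathcal{N}_{G_2}$, with $U_1 \subseteq V_1$ arbitrary.

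Let $\mathcal{A}$ and $\mathcal{B}$ be the collections of sets satisfying (a) and (b) respectively. Then $\mathcal{N}_{G_1+G_2} = \mathcal{A} \cup \mathcal{B}$, so
\[
N_{G_1+G_2}(x) = \sum_{U \in \mathcal{A}} x^{|U|} + \sum_{U \in \mathcal{B}} x^{|U|} - \sum_{U \in \mathcal{A} \cap \mathcal{B}} x^{|U|}.
\]
Because $V_1$ and $V_2$ are disjoint, $|U| = |U_1|+|U_2|$, so each of the three sums factors as a product over the two sides. Sets in $\mathcal{A}$ correspond bijectively to pairs $(U_1, U_2)$ with $U_1 \in \mathcal{N}_{G_1}$ and $U_2$ an arbitrary subset of $V_2$, giving $N_{G_1}(x)(1+x)^{|V_2|}$; sets in $\mathcal{B}$ analogously give $N_{G_2}(x)(1+x)^{|V_1|}$; and sets in $\mathcal{A}\cap\mathcal{B}$ are pairs with $U_1 \in \mathcal{N}_{G_1}$ and $U_2 \in \mathcal{N}_{G_2}$, contributing $N_{G_1}(x)N_{G_2}(x)$. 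Plugging in gives the claimed identity.

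There is no real obstacle; the only subtle point is justifying that in case (a) the $V_2$-part may be chosen freely, which relies on the complete bipartite edges introduced by the join so that any single witness $v \in V_1$ dominates all of $V_2$. Once this observation is in place, the proof is purely bookkeeping.
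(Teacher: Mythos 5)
Your argument is correct: the key observation that a witness $v\in V_1$ for $U$ in the join works precisely when $U\cap V_1\in\mathcal{N}_{G_1}$ (the $V_2$-part being free because $v$ is joined to all of $V_2$), together with inclusion--exclusion over the two types of witnesses, is exactly what yields the stated identity. The paper itself gives no proof of this proposition --- it is quoted from \cite{BrownNowakowski2008} --- and your derivation is the standard one, so there is nothing further to reconcile.
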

	
	The two graph operations, disjoint union and join, are used to define cographs. \emph{Cographs} are exactly the graphs which do not contain an induced $P_4$, a path on four vertices. They can be constructed recursively. Starting with a single vertex as a cograph, the disjoint union and the join of two cographs are cographs.
	For this and other well-known graph theoretic facts, we refer to~\cite{Golumbic1980}.
	The neighborhood polynomial of a single vertex graph is $N(K_1,x) =1$ and the two operations disjoint union and join, given by the two formulas in Proposition~\ref{prop:DisjointUnion} and Proposition~\ref{prop:Join} are computable in linear time. Note that $(1+x)^n$ can be computed in linear time using the binomial theorem. \cite{CorneilPerlStewart1985} present a linear time algorithm to recognize cographs and give the corresponding recursive construction rules using disjoint union and join. Hence the neighborhood polynomial of a cograph is computable in quadratic time.
	
	Another graph operation is attaching one vertex $v$ to a subset of vertices of a graph~$G$. This operation was studied by \cite{AlipourTittmann2021}, who gave an explicit formula for a neighborhood polynomial after attaching a vertex to a subset of vertices. More formally for a graph $G = (V,E)$, a subset $U \subseteq V$ of vertices and an additional vertex $v \notin V$, we denote by $G_{U \triangleright v}$ the graph with vertex set $V \cup \{v\}$ and edge set $E \cup \{uv \mid u \in U\}$. 
	For simplification we use the following notation for all $W\subseteq V$:
	\begin{align*}
		N_G^{\cap}(W) &= \bigcap_{w \in W} N_G(w) \quad \text{and} \\
		N_G^{\cup}(W) &= \bigcup_{w \in W} N_G(w).
	\end{align*}
	
	\begin{proposition}[\cite{AlipourTittmann2021}]
		\label{prop:AlipourTittmannUpdate}
		Let $G = (V,E)$ be a graph, $U \subseteq V$ and $v \notin V$. 
		Then the neighborhood polynomial of $G_{U \triangleright v}$ is
		\[
		N_{G_{U \triangleright v}}(x) = N_G(x) + \sum\limits_{\substack{W \subseteq U, \\W \neq \emptyset}} \phi_W + \sum\limits_{\substack{W \subseteq U, \\ W \neq \emptyset}} (-1)^{|W|+1} x(1+x)^{|N_G^{\cap}(W)|},
		\]
		where 
		\[
		\phi_W = 
		\begin{cases}
		x^{|W|},&\text{ if } N_G^{\cap}(W) = \emptyset; \\
		0,&\text{ otherwise.}
		\end{cases}
		\]
	\end{proposition}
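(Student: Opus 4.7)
The plan is to partition the neighborhood complex $\mathcal{N}_{G'}$ of $G' := G_{U \triangleright v}$ according to two binary distinctions: whether a set contains the new vertex $v$, and (for sets not containing $v$) whether it was already a member of $\mathcal{N}_G$. Each of the three resulting pieces should correspond to exactly one of the three summands in the stated formula.

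For sets $W \subseteq V$ (not containing $v$), I would first argue that $W \in \mathcal{N}_{G'}$ if and only if either $W \in \mathcal{N}_G$ or $W \subseteq U$. Adding $v$ does not change the neighborhoods of vertices in $V$ as subsets of $V$, so any old common neighbor survives; and the only genuinely new candidate for a common neighbor is $v$ itself, whose neighborhood in $G'$ is exactly $U$. The old members contribute $N_G(x)$, and the new members are precisely the non-empty $W \subseteq U$ with $N_G^{\cap}(W) = \emptyset$, yielding the $\phi_W$-sum. (The empty set lies in $\mathcal{N}_G$ whenever $V \neq \emptyset$, which we may assume.)

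Next, for sets of the form $W \cup \{v\}$ with $W \subseteq V$, a common neighbor in $G'$ must lie in $U$ (to be adjacent to $v$) and simultaneously in $N_G^{\cap}(W)$ (to be adjacent to every vertex of $W$). Hence $W \cup \{v\} \in \mathcal{N}_{G'}$ if and only if $U \cap N_G^{\cap}(W) \neq \emptyset$, and this family contributes $x$ times the generating function of such $W$.

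The main (and only substantive) step is to rewrite this last generating function in closed form via inclusion--exclusion on non-empty subsets $T \subseteq U$ with $T \subseteq N_G^{\cap}(W)$: summing $(-1)^{|T|+1}$ over such $T$ gives $1$ when $U \cap N_G^{\cap}(W) \neq \emptyset$ and $0$ otherwise. Swapping the order of summation, using the equivalence $T \subseteq N_G^{\cap}(W) \Leftrightarrow W \subseteq N_G^{\cap}(T)$, and applying the binomial identity $\sum_{W \subseteq N_G^{\cap}(T)} x^{|W|} = (1+x)^{|N_G^{\cap}(T)|}$, reproduces the third summand in the proposition. The main obstacle to watch is the bookkeeping for boundary cases such as $W = \emptyset$ or $U = \emptyset$; in both, the inclusion--exclusion still gives the correct contribution, so no case analysis beyond this single derivation is needed.
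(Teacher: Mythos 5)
Your argument is correct: the split of $\mathcal{N}_{G_{U\triangleright v}}$ into (old sets avoiding $v$), (new sets avoiding $v$, i.e.\ non-empty $W\subseteq U$ with $N_G^{\cap}(W)=\emptyset$), and (sets containing $v$) accounts for the three summands, and the inclusion--exclusion step $\sum_{\emptyset\neq T\subseteq U\cap N_G^{\cap}(W)}(-1)^{|T|+1}=1$ iff $U\cap N_G^{\cap}(W)\neq\emptyset$, combined with the symmetry $T\subseteq N_G^{\cap}(W)\Leftrightarrow W\subseteq N_G^{\cap}(T)$ and the binomial identity, correctly produces the closed form of the third summand; the only caveat you need (and did) flag is the degenerate case $V=\emptyset$, where the constant term of $N_G(x)$ vanishes and the stated formula would miss the empty set, so one tacitly assumes $G$ has at least one vertex.

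Note, however, that this paper does not prove Proposition~\ref{prop:AlipourTittmannUpdate} at all: it is quoted from \cite{AlipourTittmann2021}, so your reconstruction is essentially the route of that cited source rather than anything in the present text. The closest in-paper argument is the proof of Proposition~\ref{cor:AttachmentClique}, the specialization to attaching $v$ to a clique $C$, which begins with exactly your case distinction (sets inside $V$ that were already neighborhood sets, genuinely new sets inside $V$ --- there only $X=C$ when $C$ is maximal --- and sets containing $v$) but then replaces your inclusion--exclusion over the $2^{|U|}-1$ non-empty subsets of $U$ by the anchor-set machinery of Lemmas~\ref{lem:generatingfunctNcG1} and~\ref{lem:generatingfunctNcG2}: the generating function of $\mathcal{N}_G(C)$ is assembled from periphery polynomials indexed by the anchor sets $A\in\mathcal{A}_G(C)$. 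The trade-off is clear: your derivation gives the fully general vertex-attachment formula but with exponentially many terms in $|U|$, whereas the paper's grouping by anchor sets is what lets the update be computed with a number of terms bounded by the anchor width, which is the whole point of Theorem~\ref{thm:runtime}.
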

	
	Using this formula, Alipour and Tittmann showed that for a fixed integer~$k$, computing the neighborhood polynomial of $k$-degenerate graphs is possible in polynomial time, see~\cite{AlipourTittmann2021}.
	A \emph{$k$-degenerate graph} is a graph where every subgraph has a vertex~$v$ with $\deg(v) \leq k$.
	Using the degeneracy, we can pick one vertex of degree $\leq k$ after another and update the neighborhood polynomial by the formula of Proposition \ref{prop:AlipourTittmannUpdate} in order to get a polynomial runtime. 
	As a corollary it follows that there is a polynomial-time algorithm to compute the neighborhood polynomial for planar (or more general graphs of bounded genus) and $k$-regular graphs, see \cite{AlipourTittmann2021}.
	This update formula of Alipour and Tittman (see Proposition~\ref{prop:AlipourTittmannUpdate}) was the starting point of our investigations for chordal graphs. 
	
	A graph $G$ is said to be \emph{chordal} if there is no induced cycle of length $\geq 4$.
	Equivalently a graph is chordal if and only if it has a perfect elimination order.
	A \emph{perfect elimination order} is an ordering $v_1, \ldots, v_n$ of the vertices such that for all $i$ the neighborhood of $v_i$ in $G[\{v_i, \ldots v_n \}]$ is a clique. 
	Here for a subset $U \subseteq V$ the graph $G[U]$ denotes the subgraph of $G$ induced by $U$.
	A vertex, whose neighborhood is a clique is called \emph{simplicial}.
	It is well-known that every non-trivial chordal graph has at least two simplicial vertices, which gives us the perfect elimination order (cf. \cite{Golumbic1980}).
	In order to study the neighborhood polynomial of chordal graphs and their subclasses, we make use of the perfect elimination order to build the chordal graph by attaching one vertex after another to a clique. 
	We adapt the formula of Alipour and Tittmann (Proposition~\ref{prop:AlipourTittmannUpdate}) to our use. 
	To get some complexity results of computing the neighborhood polynomial, the connection to the domination polynomial is useful. For this we introduce dominating sets.
	A \emph{dominating set} of a graph $G = (V,E)$ is a set of vertices $D \subseteq V$ such that every vertex of the graph is either in $D$ or adjacent to a vertex of $D$, i.e.  
	\begin{align*}
	D \ \cup N^{\cup}_G(D) = V.
	\end{align*}
	The family of all dominating sets of a graph $G$ is denoted by $\mathcal{D}_G$ and the \emph{domination polynomial} $D_G(x)$ is the generating function of $\mathcal{D}_G$ that is 
	\begin{align*}
	D_G(x) = \sum_{U \in \mathcal{D}_G} x^{|U|}.
	\end{align*}
	The following relation between domination polynomials and neighborhood polynomials holds. For a proof see for example \cite{HeinrichTittmann2018}. 
	\begin{proposition} \label{prop:connectionDominating}
		For a graph $G = (V,E)$ and its complement graph $\overline{G}$ it holds:
		\begin{align*}
		D_{\overline{G}}(x) + N_G(x) = (1+x)^{|V|}.
		\end{align*}
	\end{proposition}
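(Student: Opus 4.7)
The plan is to prove the identity by a partition argument on $2^V$. Observe that $(1+x)^{|V|} = \sum_{U \subseteq V} x^{|U|}$ is the generating function, by cardinality, of the full power set of $V$. Hence it suffices to show that the families $\mathcal{D}_{\overline{G}}$ and $\mathcal{N}_G$ partition $2^V$, and then group the sum $\sum_{U \subseteq V} x^{|U|}$ accordingly.

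To that end, I would unfold the two definitions and check that they are complementary. A set $U \subseteq V$ is \emph{not} a dominating set of $\overline{G}$ iff there exists $v \in V \setminus U$ with no neighbor in $U$ inside $\overline{G}$. Because $\overline{G}$ has an edge $uv$ precisely when $uv \notin E$ (and $u \neq v$), the condition ``$v$ has no $\overline{G}$-neighbor in $U$'' is equivalent, for $v \notin U$, to ``$uv \in E$ for every $u \in U$'', i.e.\ to $U \subseteq N_G(v)$. Thus
\[
U \notin \mathcal{D}_{\overline{G}} \iff \exists\, v \in V \setminus U \text{ with } U \subseteq N_G(v).
\]

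On the other side, $U \in \mathcal{N}_G$ iff there exists $v \in V$ with $U \subseteq N_G(v)$. Since graphs are simple, $v \notin N_G(v)$, so any such $v$ automatically lies in $V \setminus U$. Hence the right-hand sides of the two equivalences coincide, giving $U \in \mathcal{N}_G \iff U \notin \mathcal{D}_{\overline{G}}$. Consequently $\mathcal{N}_G$ and $\mathcal{D}_{\overline{G}}$ partition $2^V$, and splitting the sum yields
\[
(1+x)^{|V|} = \sum_{U \subseteq V} x^{|U|} = \sum_{U \in \mathcal{N}_G} x^{|U|} + \sum_{U \in \mathcal{D}_{\overline{G}}} x^{|U|} = N_G(x) + D_{\overline{G}}(x).
\]

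There is no real obstacle here; the only care needed is in tracking the complementation when passing between ``no $\overline{G}$-edge'' and ``is a $G$-neighbor'', and in observing that the witness $v$ for $U \in \mathcal{N}_G$ is automatically outside $U$ so that the ``dominating'' and ``neighborhood'' witnesses match up. The argument is essentially the same combinatorial identity used in \cite{HeinrichTittmann2018}.
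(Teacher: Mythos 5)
Your proof is correct and matches the paper's (cited) argument exactly: the paper only remarks that every vertex set either has a common neighbor in $G$ or is a dominating set in $\overline{G}$ and refers to \cite{HeinrichTittmann2018}, and your write-up is precisely this complementation/partition of $2^V$, carried out carefully (including the observation that the witness $v$ lies outside $U$).
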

	With other words this proposition states that every vertex set has either a common neighbor in the graph or is a dominating set in the complement graph. 
	The connection of these two polynomials can be used to determine the complexity of computing the neighborhood polynomial. In particular, the neighborhood polynomial is computable in polynomial time if and only if the domination polynomial of the complement graph is computable in polynomial time. 
	Furthermore the contributions to the well-known graph problem DOMSET, imply some complexity results for the neighborhood polynomial. DOMSET is the problem of
	deciding whether a graph has a dominating set of size $\leq k$ for a given $k$. 
	\begin{corollary} \label{col:domsethard}
		Let $\mathcal{G}$ be a class of graphs and $\overline{\mathcal{G}}$ the class of the complement graphs of $\mathcal{G}$. 
		If DOMSET is \NP-complete on $\overline{\mathcal{G}}$, then computing the neighborhood polynomial on $\mathcal{G}$ is \NP-hard.
	\end{corollary}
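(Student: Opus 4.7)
The plan is to give a polynomial-time Turing reduction from DOMSET on $\overline{\mathcal{G}}$ to the problem of computing the neighborhood polynomial on $\mathcal{G}$. The key observation is that Proposition~\ref{prop:connectionDominating} already establishes an algebraic identity between $N_G(x)$ and $D_{\overline{G}}(x)$, so the work reduces to packaging this into a reduction.

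Concretely, suppose we are given an instance $(H,k)$ of DOMSET with $H \in \overline{\mathcal{G}}$. First I would form the complement $G = \overline{H}$, which lies in $\mathcal{G}$ and can be computed in time $\mathcal{O}(|V|^2)$. Next, assuming a polynomial-time oracle for computing $N_G(x)$, I invoke it to obtain $N_G(x)$. Applying Proposition~\ref{prop:connectionDominating}, I then compute
\[
D_{H}(x) = (1+x)^{|V(H)|} - N_G(x),
\]
which takes polynomial time since $(1+x)^{|V(H)|}$ is expanded via the binomial theorem in linear time in $|V(H)|$.

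Finally, to answer DOMSET, I read off the coefficients of $D_H(x)$: the instance is a yes-instance if and only if there exists some $j \le k$ with a nonzero coefficient in front of $x^j$, since such a coefficient counts the dominating sets of $H$ of size exactly $j$. The whole procedure is polynomial in the size of $H$ together with the runtime of the neighborhood polynomial algorithm, so \NP-completeness of DOMSET on $\overline{\mathcal{G}}$ transfers to \NP-hardness of computing the neighborhood polynomial on $\mathcal{G}$.

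There is no real technical obstacle here; the main point is verifying that the reduction is polynomial in all its steps, which it is because complementation, binomial expansion, polynomial subtraction and coefficient inspection are all elementary operations. The statement is thus a direct corollary of Proposition~\ref{prop:connectionDominating}.
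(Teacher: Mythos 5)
Your proposal is correct and follows exactly the route the paper intends: the corollary is stated without an explicit proof, but the surrounding discussion makes clear it rests on Proposition~\ref{prop:connectionDominating} via precisely the Turing reduction you describe (complement the input, query the neighborhood-polynomial oracle, recover $D_H(x)$ by subtraction, and decide DOMSET from the low-degree coefficients). Nothing in your write-up deviates from or adds beyond the paper's implicit argument.
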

	
	DOMSET is \NP-hard on  
	many graph classes such as chordal graphs, see~\cite{BoothJohnson1982}. 
	\cite{Bertossi1984} showed that it is \NP-hard on bipartite graphs and split graphs. \emph{Split graphs} are the graphs where the vertex set can be partitioned into a clique and an independent set. Since split graphs are exactly the graphs which are chordal and \emph{co-chordal} (i.e. the complement graph is chordal), DOMSET is also \NP-hard on co-chordal graphs. 
	This together with Corollary~\ref{col:domsethard} shows the \NP-hardness of computing the neighborhood polynomial in split graphs (cf.~\cite{Day2017}) and hence in chordal graphs.
	
	\section{Algorithm for Chordal Graphs}
	\label{sec:anchorDegree}

	Our algorithm relies on the perfect elimination order of chordal graphs and comes from the vertex-attachment formula of Alipour and Tittmann, see Proposition~\ref{prop:AlipourTittmannUpdate}. 
	First, we adapt their formula to our special case where we attach a vertex to a clique. 
	To study the new arising neighborhood sets after vertex attachment, we introduce anchor sets, which are subsets of a clique appearing as a common neighborhood of a set of vertices. 
	The maximal number of anchor sets of a clique, which we denote as anchor width, is the essential parameter in our algorithm to get a polynomial runtime. \\

	\begin{figure}[tb]
		\centering
		\includegraphics[scale = 0.6,page = 1]{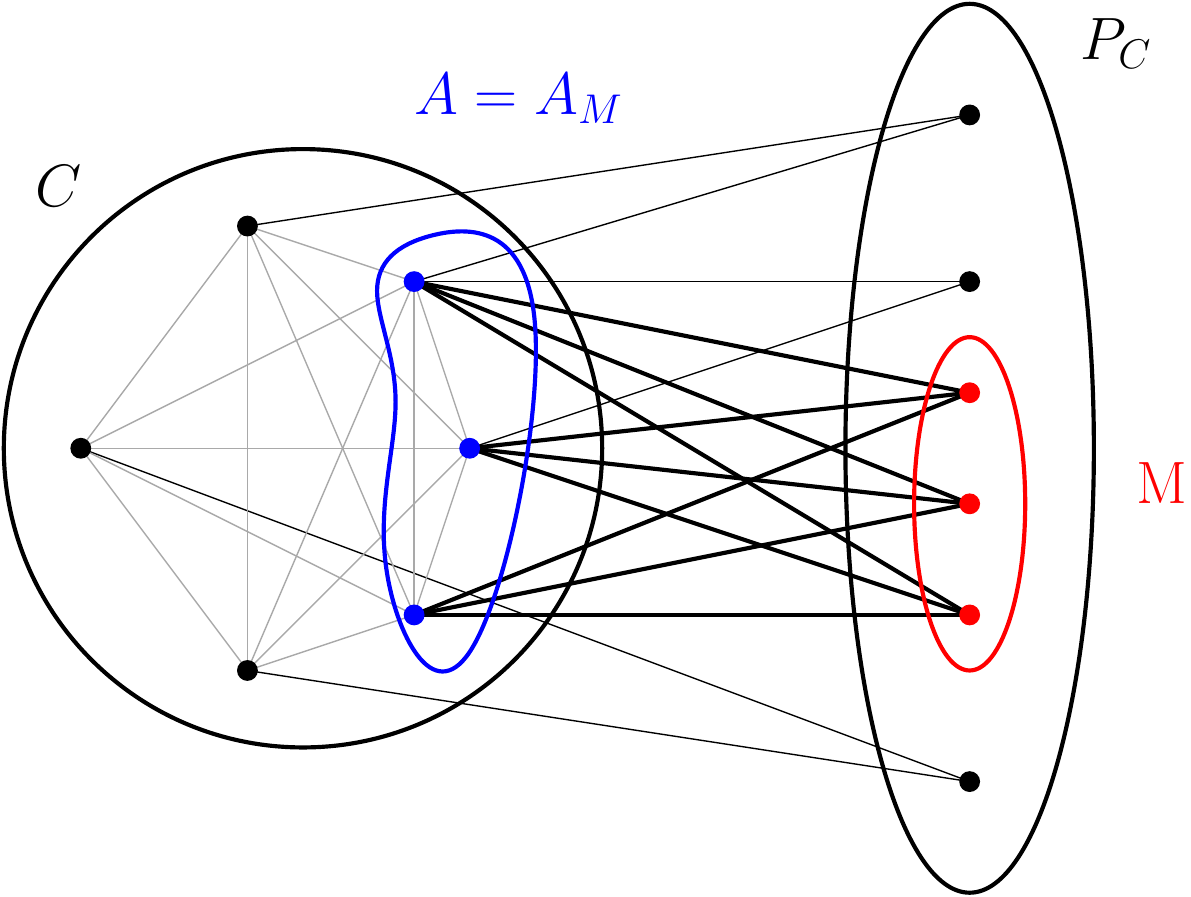}
		\caption{An illustration of the introduced sets, the periphery $P_C$ and a periphery set $M$ with corresponding anchor set $A_G(M,C)$ of a clique $C$. }
		\label{fig:illustrationDefinition}
	\end{figure}
	
	Let $C$ be a clique in a graph $G = (V,E)$.
	We define the set of neighbors of the clique $C$, not including the clique itself as the \emph{periphery} of $C$, denoted by 
	\begin{align*}
	P_G(C) = N^{\cup}_G(C) \backslash C.
	\end{align*}
	A subset $M \subseteq P_G(C)$ of the periphery is called \emph{periphery set}. Note that the empty set is also a periphery set. 
	We call a non-empty subset $A$ of $C$ \emph{anchor set}, if it is the common neighborhood in $C$ of some periphery set $M$. See Figure~\ref{fig:illustrationDefinition} for an illustration. In general not all subsets of a clique are an anchor set. 
	For every $M \subseteq P_G(C)$ we define the \emph{corresponding anchor set} in $C$ as
	\begin{align*}
	A_G(M,C) =  N^{\cap}_G(M) \cap C	
	\end{align*}
	if the intersection is non-empty. The neighborhood of the empty periphery set $M = \emptyset$ is the empty intersection and thus the corresponding anchor set is $C$. 
	Moreover, several periphery sets $M$ and $M'$ can correspond to the same anchor set $A_G(M,C) = A_G(M',C)$.
	For an anchor set $A$ of $C$, the periphery sets $M \subseteq P_G(C)$ whose common  neighborhood in $C$ is $A$ build the family
	\[
	\mathcal{P}_G(A,C) = \left\{M \subseteq P_G(C) \mid A_G(M,C) = A \right\}.
	\]
	The generating function of $\mathcal{P}_G(A,C)$ is called \emph{periphery polynomial} and defined by
	\[
	P_G(A,C,x) = \sum_{M \in \mathcal{P}_G(A,C)} x^{|M|}.
	\]
	Note that $\mathcal{P}_G(A,C) = \emptyset$ and $P_G(A,C,x) =0$ if $A$ is not an anchor set of $C$. 
	The family of all anchor sets of a clique $C$ is 
	\[
	\mathcal{A}_G(C) = \{A \subseteq C \mid  A \neq \emptyset \text{ and } \exists M \subseteq P_G(C):  A = A_G(M,C)\}.
	\]
	Note that $C \in  \mathcal{A}_G(C)$ for every clique $C$, since $C$ is the anchor set of the empty periphery set. 
	The \emph{anchor width} of a graph $G$ is the smallest number $k$ such that $|\mathcal{A}_G(C) | \leq k$ for all cliques $C$ in $G$. 
	
	For a maximal clique $C_{max}$ and a clique $C$ contained in $C_{max}$ the following relations hold.
	
	\begin{lemma}\label{lem:InclusionClique}
		Let $C$ be a clique and $C_{max}$ a maximal clique containing $C$ in a graph $G= (V,E)$.
		Then the following conditions hold:
		\begin{enumerate}[(a)]
			\item $C_{max}\backslash C \subseteq P_G(C) \subseteq P_G(C_{max}) \cup (C_{max} \backslash C)$ 
			\item $\mathcal{A}_G(C) = \{A \cap C \mid A \cap C \neq \emptyset \text{ and }A \in \mathcal{A}_G(C_{max})\} $ 
			\item For every $A \in \mathcal{A}_G(C)$ the periphery polynomial is
			\[
			P_G(A,C,x)  = (1+x)^{|C_{max}\backslash C|}\sum_{ \substack{A' \in \mathcal{A}_G(C_{max})\\ A' \cap C = A}}  P_G(A',C_{max},x).
			\] 
		\end{enumerate}
	\end{lemma}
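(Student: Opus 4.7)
The plan is to prove the three parts in order, each leveraging the previous one. The guiding observation is that because $C_{max}$ is a clique containing $C$, every vertex $u \in C_{max}\setminus C$ is adjacent to every vertex of $C$; equivalently, placing such a vertex into a periphery set imposes no constraint on the common neighborhood inside $C$. For (a), the inclusion $C_{max}\setminus C\subseteq P_G(C)$ is then immediate, while for $P_G(C)\subseteq P_G(C_{max})\cup(C_{max}\setminus C)$ I would take $v\in P_G(C)$, note that it has a neighbor in $C\subseteq C_{max}$, and split into the cases $v\in C_{max}$ (giving $v\in C_{max}\setminus C$) and $v\notin C_{max}$ (giving $v\in P_G(C_{max})$).

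For (b), given $A=A_G(M,C)\in\mathcal{A}_G(C)$, I would split $M$ along the decomposition from (a) as $M=M_1\sqcup M_2$ with $M_1:=M\cap P_G(C_{max})$ and $M_2:=M\cap(C_{max}\setminus C)$; the split is disjoint because $P_G(C_{max})$ is disjoint from $C_{max}$. Since each vertex in $M_2$ is adjacent to all of $C$, it can be dropped without changing the common neighborhood in $C$, i.e.\ $N_G^\cap(M)\cap C=N_G^\cap(M_1)\cap C$. Setting $A':=N_G^\cap(M_1)\cap C_{max}$ then gives $A'\cap C=A$ and $A'\supseteq A\neq\emptyset$, so $A'\in\mathcal{A}_G(C_{max})$. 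Conversely, for any $A'\in\mathcal{A}_G(C_{max})$ with $A'\cap C\neq\emptyset$ and any witness $M'\in\mathcal{P}_G(A',C_{max})$, I would observe that $M'\cap C_{max}=\emptyset$ and every $v\in M'$ has a neighbor in the nonempty set $A'\cap C\subseteq C$, so $M'\subseteq P_G(C)$ and $A_G(M',C)=A'\cap C$ is an anchor set of $C$.

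For (c), I would reuse the bijective decomposition $M\leftrightarrow(M_1,M_2)$ from (b). The coordinate $M_2\subseteq C_{max}\setminus C$ is completely free, contributing the factor $(1+x)^{|C_{max}\setminus C|}$. The coordinate $M_1\subseteq P_G(C_{max})$ must satisfy $N_G^\cap(M_1)\cap C=A$; by the argument in (b) this is equivalent to $M_1\in\mathcal{P}_G(A',C_{max})$ for some $A'\in\mathcal{A}_G(C_{max})$ with $A'\cap C=A$, and different $A'$ give disjoint families $\mathcal{P}_G(A',C_{max})$. Summing $x^{|M_1|}$ over all admissible $M_1$ yields the indicated sum, and multiplying by $(1+x)^{|C_{max}\setminus C|}$ finishes the proof. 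The main obstacle I anticipate is the careful bookkeeping in (c): ensuring the decomposition is truly a bijection, handling the empty periphery set consistently (it corresponds to $A'=C_{max}$ and only contributes when $A=C$), and verifying that every $M_1\in\mathcal{P}_G(A',C_{max})$ with $A'\cap C=A\neq\emptyset$ automatically lies in $P_G(C)$, which is precisely the content of the converse direction of (b).
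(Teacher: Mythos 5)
Your proof is correct and takes essentially the same route as the paper's: part (a) by the same case split, and parts (b) and (c) via decomposing a periphery set of $C$ into its portion in $P_G(C_{max})$ and its unconstrained portion in $C_{max}\setminus C$, with the factor $(1+x)^{|C_{max}\setminus C|}$ and the sum over $A'$ with $A'\cap C=A$ justified by the disjointness of the families $\mathcal{P}_G(A',C_{max})$. Your explicit bijection $M\leftrightarrow(M_1,M_2)$ merely makes the paper's informal case analysis a bit more precise.
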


\begin{proof}
	\textit{(a)} Since every vertex in the clique $C_{max} \backslash C$ is adjacent to~$C$, the first inclusion holds. 
	Furthermore, every element which is adjacent to one of the elements in $C$ is either an element of $C_{max} \backslash C$ or it is adjacent to an element of~$C_{max}$.
	
	To show \textit{(b)}, we check which subsets of the clique $C$ can appear as an anchor set. Let $M' \subseteq P_G(C)$ be a periphery set such that $A_G(M,C)$ is a non-empty anchor set. Using \textit{(a)} we distinguish three cases. 
	
	If $M' \subseteq C_{max} \backslash C$, the anchor set of $M'$ is $C$ itself. 
	If $M' \subseteq P_G(C_{max})$ there exists an anchor set $A = A_G(M,C_{max})$. Since $M'$ only consists of elements of the periphery of $C$, the intersection of $A$ with $C$ provides the anchor set $A_G(M,C)$, which is non-empty.
	In the final case, $M'$ consist of elements of $C_{max} \backslash C$ and $P_G(C_{max})$. We only need to consider $M = M' \cap P_G(C_{max})$, since the elements of $C_{max} \backslash C$ just lead to another intersection with $C$. We continue as in the second case. 
	
	On the other hand, a set $A' = A \cap C \neq \emptyset $ for $ A \in \mathcal{A}_G(C_{max})$ is always an anchor set. 
	For $A$ there exists a periphery set $M \subseteq P_G(C_{max})$ which has $A$ as common neighborhood $A_G(M,C_{max})$ in $C_{max}$. If we take all elements of $M$ which are in the periphery of $C$, the common intersection of those elements inside $C$ is exactly $A'$. Hence $A'$ is an anchor set. 
	
	\textit{(c)} The periphery polynomial $P_G(A,C,x)$ counts the different periphery sets with respective to the size, where $A$ is the corresponding anchor set. 
	As we have seen in \textit{(b)}, an anchor set $A$ is given by $A = A' \cap C$ for an anchor set $A' \in \mathcal{A}_G(C_{max})$. Since there are different possibilities to choose $A'$, we sum over all corresponding periphery polynomials which are counted in $P_G(A',C,x)$. 
	Furthermore all elements in $C_{max }\backslash C$ are in the periphery of $C$ (see (a)) with neighborhood $C$. 
	Hence we can add elements of $C_{max} \backslash C$ to any periphery set $M'$ with anchor set $A_G(M',C_{max}) = A'$ and still have as anchor set $A_G(M,C) = A$.
	For the polynomial as generating function, we multiply $P_G(A',C,x)$ by \mbox{$(1+x)^{|C_{max}\backslash C|}$}.
	
	For different anchor sets $A'$ and $A''$ with $A'' \cap C = A = A' \cap C$, the corresponding periphery sets are pairwise different, since the common neighborhood inside $C_{max}$ is different. 
	So in order to get all periphery sets corresponding to $A$, we need to sum up these the polynomials.
\end{proof}
	
	This shows that it is sufficient to provide the information about anchor sets and periphery polynomials for all maximal cliques of the graph. With this information we are able to compute the necessary information for all other cliques. Furthermore the anchor width only depends on the size of the anchor family of the maximal cliques.
	\medskip
	
	In the following, we derive a formula for the neighborhood polynomial after vertex attachment using the periphery polynomial. 
	For every set $U \subseteq V$ of vertices we define the \emph{local neighborhood} $\mathcal{N}_G(U)$ of $U$ as the family 
	consisting of all vertex sets of $G$ which have a common neighbor in~$U$, that is
	\[
	\mathcal{N}_G(U) = \{ W \subseteq V \mid \exists v \in U: \ W \subseteq N_G(v) \}.
	\]
	Note that the local neighborhood $\mathcal{N}_G(V)$ of the whole vertex set is equal to the neighborhood complex $\mathcal{N}_G$.
	For every clique $C$, we can partition the local neighborhood $\mathcal{N}_{G}(C)$ by the following lemma into the disjoint sets 
	\begin{align*}
	\mathcal{N}_G(A,C) = \{N \in \mathcal{N}_G(C) \mid  N \cap P_G(C) \in \mathcal{P}_{G}(A,C) \}, \quad A \in \mathcal{A}_G(C).
	\end{align*}

	\begin{lemma}\label{lem:generatingfunctNcG1}
		For every clique $C$ of the graph $G$, it holds
		\[
		\mathcal{N}_G(C) = \mathop{\dot \bigcup}_{A \in \mathcal{A}_G(C)} \mathcal{N}_G(A,C).
		\]
	\end{lemma}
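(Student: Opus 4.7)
The plan is to verify the two ingredients of a partition separately: (i) every set in $\mathcal{N}_G(C)$ belongs to some member of the right-hand family, and (ii) the family members are pairwise disjoint.

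First I would unpack the definitions to prove the inclusion $\mathcal{N}_G(C) \subseteq \bigcup_{A \in \mathcal{A}_G(C)} \mathcal{N}_G(A,C)$. Take any $N \in \mathcal{N}_G(C)$ and set $M := N \cap P_G(C)$. By construction $M \subseteq P_G(C)$, so $M$ is a periphery set, and I let $A := A_G(M,C) = N_G^{\cap}(M) \cap C$. The key observation is that $A$ is non-empty, hence an honest anchor set in $\mathcal{A}_G(C)$: since $N \in \mathcal{N}_G(C)$, there is some $v \in C$ with $N \subseteq N_G(v)$, and in particular $M \subseteq N_G(v)$, which means $v$ is a common neighbor of every element of $M$ and therefore $v \in N_G^{\cap}(M) \cap C = A$. (The special case $M = \emptyset$ is handled by the convention that the empty intersection gives $A = C$, and $C \in \mathcal{A}_G(C)$ is already noted in the paper.) With this $A$, by definition of $\mathcal{N}_G(A,C)$ we have $N \in \mathcal{N}_G(A,C)$, giving the desired inclusion. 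The reverse inclusion is immediate from the definition $\mathcal{N}_G(A,C) \subseteq \mathcal{N}_G(C)$.

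For disjointness, suppose $N \in \mathcal{N}_G(A,C) \cap \mathcal{N}_G(A',C)$ for two anchor sets $A, A' \in \mathcal{A}_G(C)$. Then $M := N \cap P_G(C)$ lies in both $\mathcal{P}_G(A,C)$ and $\mathcal{P}_G(A',C)$. But the family $\mathcal{P}_G(\cdot,C)$ is exactly the fibration of periphery sets over their corresponding anchor set, so $A = A_G(M,C) = A'$. Hence the sets $\mathcal{N}_G(A,C)$ are pairwise disjoint.

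There is no real obstacle here; everything follows once the definitions are unfolded carefully. The only thing that deserves attention is the empty-periphery case, since it is the mechanism by which the maximal anchor set $C$ itself appears and absorbs all those $N \in \mathcal{N}_G(C)$ that avoid the periphery entirely. Once that is observed, the two inclusions and disjointness drop out, and the lemma follows.
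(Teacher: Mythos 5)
Your proof is correct and follows the same route as the paper: you assign to each $N \in \mathcal{N}_G(C)$ the anchor set $A_G(N \cap P_G(C), C)$, use the common neighbor $v \in C$ of $N$ to show this anchor set is non-empty, and obtain disjointness because the anchor set of $N \cap P_G(C)$ is uniquely determined. Your write-up is simply more explicit than the paper's (notably on the empty-periphery case and the reverse inclusion), but the argument is the same.
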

	\begin{proof}
		For every $N \in \mathcal{N}_G(C)$ there is by definition a clique-vertex $v \in C$ which is adjacent to every element in $N$. Hence the common neighborhood of $N \cap P_G(C)$ inside $C$ is non-empty. Note that in general $N$ is not a periphery set since it can contain elements from $C$. The common neighborhood is an anchor set $A$. Since these anchor sets differ for different families $\mathcal{N}_G(A,C)$ the union is disjoint.
	\end{proof}
	
	Lemma \ref{lem:generatingfunctNcG1} is useful since we only have to determine the generating functions of $\mathcal{N}_G(A,C)$ for every $A \in \mathcal{A}_G(C)$. Adding these generating functions, we maintain the generating function of the local neighborhood $\mathcal{N}_G(C)$. 
	In the next lemma, we derive a formula to compute the generating function of
	$\mathcal{N}_{G}(A,C)$  for every $A \in \mathcal{A}_G(C)$. 
	
	\begin{lemma}\label{lem:generatingfunctNcG2}
		For a given anchor set $A \in \mathcal{A}_G(C)$ of a clique $C$, the generating function of $\mathcal{N}_{G}(A,C)$ is
		\[
		N_G(A,C,x) = P_G(A,C,x) \left( (1+x)^{|C|}- x^{|A|}(1+x)^{|C|-|A|} \right).
		\]
	\end{lemma}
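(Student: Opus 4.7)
The plan is to give a bijective / counting argument: describe exactly which subsets $N\subseteq V$ belong to $\mathcal{N}_G(A,C)$, factor each such $N$ in a canonical way into a part inside $C$ and a part inside $P_G(C)$, and then read off the generating function as a product.

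First I would observe that $C$ and $P_G(C)$ are disjoint, so every $N\subseteq V$ that has a common neighbor in $C$ must lie in $C\cup P_G(C)$ (because any neighbor of a vertex of $C$ is either in $C$ or in $P_G(C)$), and it admits a unique decomposition $N = N_C \,\dot\cup\, N_P$ with $N_C=N\cap C$ and $N_P=N\cap P_G(C)$. With this notation, $N\in\mathcal{N}_G(A,C)$ means two things: (i) the ``periphery part'' $N_P$ satisfies $N_P\in\mathcal{P}_G(A,C)$, i.e.\ $A_G(N_P,C)=A$; and (ii) there exists $v\in C$ adjacent to every vertex of $N$.

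The key technical step is to rewrite condition (ii) purely in terms of $N_C$ and $A$. Any witness $v\in C$ must be adjacent to every $u\in N_P$, so $v\in N_G^{\cap}(N_P)\cap C = A_G(N_P,C)=A$; and $v$ must be adjacent to every $u\in N_C\subseteq C$, which, since $C$ is a clique and $v\notin N_G(v)$, is equivalent to $v\notin N_C$. Conversely, any $v\in A\setminus N_C$ serves as such a witness. Hence (ii) is equivalent to $A\not\subseteq N_C$ (using that anchor sets are non-empty). Consequently
\[
\mathcal{N}_G(A,C) \;=\; \bigl\{\,N_P\cup N_C \,\bigm|\, N_P\in\mathcal{P}_G(A,C),\; N_C\subseteq C,\; A\not\subseteq N_C\,\bigr\},
\]
and the decomposition is unique.

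Finally, because $|N|=|N_P|+|N_C|$ and the two coordinates vary independently, the generating function factors:
\[
N_G(A,C,x) \;=\; \Bigl(\sum_{N_P\in\mathcal{P}_G(A,C)} x^{|N_P|}\Bigr)\Bigl(\sum_{\substack{N_C\subseteq C\\ A\not\subseteq N_C}} x^{|N_C|}\Bigr).
\]
The first factor is $P_G(A,C,x)$ by definition. For the second, I would write it as the difference between the sum over all $N_C\subseteq C$ and the sum over those $N_C$ with $A\subseteq N_C$. The former is $(1+x)^{|C|}$; the latter, parametrized by the free choice of a subset of $C\setminus A$ to adjoin to $A$, equals $x^{|A|}(1+x)^{|C|-|A|}$. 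Substituting gives the claimed formula. I do not expect any real obstacle beyond carefully justifying the equivalence between the existence of the clique-vertex witness $v$ and the condition $A\not\subseteq N_C$; once that is nailed down, the rest is straightforward bookkeeping.
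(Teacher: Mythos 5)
Your proposal is correct and follows essentially the same route as the paper: decompose each $N\in\mathcal{N}_G(A,C)$ into its periphery part $M\in\mathcal{P}_G(A,C)$ and a subset $X\subseteq C$ that must not contain all of $A$, then multiply the two generating functions. Your explicit witness argument (a common neighbor in $C$ exists iff $A\not\subseteq N\cap C$) just spells out carefully what the paper states more briefly, and your inclusion–exclusion count of $\{X\subseteq C: A\not\subseteq X\}$ is the same quantity the paper computes as $\bigl((1+x)^{|A|}-x^{|A|}\bigr)(1+x)^{|C|-|A|}$.
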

	\begin{proof} 
		We count the number of sets with respect to the cardinality in $\mathcal{N}_{G}(A,C)$. 
		Every $M \in \mathcal{P}_G(A,C)$ is in $\mathcal{N}_{G}(A,C)$. Hence $P_G(A,C,x)$ must be a summand of $N_G(A,C,x)$.
		Furthermore there are supersets $N$ for all $M$ which contribute to $N_G(A,C,x)$. 
		Since we look at all $M \in \mathcal{P}_G(A,C)$, it is enough to look at supersets $N = M \cup X$, where $X$ is a subset of $C$. 
		In order to keep $N$ in the local neighborhood $\mathcal{N}_G(C)$, we need a common neighbor in $C$. Since the common neighborhood of $M$ inside $C$ is the anchor set $A$, the common neighborhood of $N$ must contain an element of $A$. Hence $X$ cannot be the whole anchor set $A$. 
		In particular, the possibilities to extend $M$ are the elements of the family
		\[	
		\mathcal{X} = \{X \mid \exists a \in A: X \subseteq C \backslash \{a\}  \}.
		\]
		All sets in $\mathcal{X}$ consist of a disjoint union of a proper subset of $A$ and a subset of $C \backslash A$. This leads to the generating function 
		\[
		\left( (1+x)^{|A|} - x^{|A|} \right)(1+x)^{|C|-|A|}  
		= (1+x)^{|C|}- x^{|A|}(1+x)^{|C|-|A|}
		\]
		of $\mathcal{X}$. 
		The generating function of $\mathcal{P}_G(A,C)$, which counts the different possibilities of $M$ is counted by $P_G(A,C,x)$.
	\end{proof}
	
	We are now ready to prove the update formula for the neighborhood polynomial after attaching a vertex to a clique. The proof and the formula are similar to Proposition~\ref{prop:AlipourTittmannUpdate} (cf.~\cite{AlipourTittmann2021}).
	\begin{proposition} \label{cor:AttachmentClique}
		Let $G = (V,E)$ be a graph and $C$ a clique in the graph. 
		The neighborhood polynomial of $G_{C \triangleright v}$ with vertex set $V \cup \{v\}$ is:
		\begin{align*}
		N_{G_{C \triangleright v}}(x) &= N_G(x)+\phi_G(C) \\
		&
		+ \ x{\sum\limits_{A \in \mathcal{A}_G(C)}}{P_G(A,C,x) \left( (1+x)^{|C|}- x^{|A|}(1+x)^{|C|-|A|} \right)},
		\end{align*}
		where
		\[
		\phi_G(C) = \begin{cases}
		x^{|C|}, &\text{ if } C \text{ is a maximal clique in } G;\\
		0, &\text{ otherwise }.
		\end{cases}
		\]
	\end{proposition}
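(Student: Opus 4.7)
The plan is to prove this by directly partitioning the neighborhood complex of $G' := G_{C \triangleright v}$ according to whether the attached vertex $v$ belongs to a set or not, and then matching each piece against one of the three summands in the claimed formula.

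First, I would observe that since $N_{G'}(u) = N_G(u) \cup \{v\}$ for $u \in C$, $N_{G'}(u) = N_G(u)$ for $u \in V \setminus C$, and $N_{G'}(v) = C$, every set that already had a common neighbor in $G$ still has one in $G'$. This gives the inclusion $\mathcal{N}_G \subseteq \mathcal{N}_{G'}$ (after the obvious identification of subsets of $V$ with subsets of $V \cup \{v\}$ not containing $v$), accounting for the $N_G(x)$ term.

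Next I would analyze the new sets $W \in \mathcal{N}_{G'}$ with $v \notin W$. Such a $W$ must be a subset of $V$ that acquires a common neighbor only through $v$, so $W \subseteq N_{G'}(v) = C$. For any proper subset $W \subsetneq C$ one can pick any $c \in C \setminus W$ as common neighbor in $G$ itself (using that $C$ is a clique), hence $W \in \mathcal{N}_G$ and contributes nothing new. The only possibly new subset is $W = C$, and $C \in \mathcal{N}_G$ precisely when there is a vertex $u \in V \setminus C$ adjacent to all of $C$, i.e.\ when $C$ is not maximal. Therefore the contribution of new sets not containing $v$ is exactly $x^{|C|}$ if $C$ is maximal and $0$ otherwise, which is $\phi_G(C)$.

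For the sets $W \in \mathcal{N}_{G'}$ containing $v$, write $W = W' \cup \{v\}$ with $W' \subseteq V$. A common neighbor $u$ of $W$ must be adjacent to $v$, hence $u \in C$, and then $W' \subseteq N_{G'}(u) \setminus \{v\} = N_G(u)$. Conversely any $W'$ with a common neighbor in $C$ yields such a $W$. Thus these sets are in bijection with the local neighborhood $\mathcal{N}_G(C)$, and their generating function is $x$ times the generating function of $\mathcal{N}_G(C)$. Applying Lemma~\ref{lem:generatingfunctNcG1} to partition $\mathcal{N}_G(C)$ by anchor set and Lemma~\ref{lem:generatingfunctNcG2} to evaluate each piece yields
\[
x \sum_{A \in \mathcal{A}_G(C)} P_G(A,C,x)\left((1+x)^{|C|} - x^{|A|}(1+x)^{|C|-|A|}\right),
\]
which is the third summand of the claimed formula. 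Summing the three disjoint contributions finishes the proof. The only subtle point is the maximality analysis in the $v \notin W$ case, which is why the correction term $\phi_G(C)$ must be split off cleanly from the main sum; everything else is routine bookkeeping given the two preceding lemmas.
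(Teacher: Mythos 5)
Your proof is correct and follows essentially the same route as the paper: the same three-way case split (sets already in $\mathcal{N}_G$, the possible new set $C$ when $C$ is maximal, and sets containing $v$ counted via $x$ times the generating function of $\mathcal{N}_G(C)$ using Lemmas~\ref{lem:generatingfunctNcG1} and~\ref{lem:generatingfunctNcG2}). Your write-up is if anything slightly more explicit than the paper's (e.g.\ in arguing that a common neighbor of a set containing $v$ must lie in $C$), but there is no substantive difference.
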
 
	
	\begin{proof}
		Let $X \in \mathcal{N}_{G_{C \triangleright v}}$ be a neighborhood set of the graph $G_{C \triangleright v}$. We consider the following three cases:
		\begin{itemize}
			\item If $X \subseteq V$ and $X \not \subseteq C$, then $X\in \mathcal{N}_G$ is in the neighborhood complex of $G$. Hence $X$ is considered in the first summand $N_G(x)$ of the above formula.
			\item Now let $X \subseteq V$ and $X \subseteq C$. If $X$ is a proper subset of $C$, it already has a common neighbor in $G$, hence it is already counted in the first summand. 
			Similarly this holds if $X = C$ and $C$ is not a maximal clique in $G$.
			Thus the only case where a new neighborhood arises is if $X = C$ and $C$ is a maximal clique in $G$.
			In $G_{C \triangleright v}$ the common neighbor of $C$ is $v$. This is counted in the summand in $\phi_G(C)$. 
			\item Let us now consider the case $v \in X$, i.e. $ X \not \subseteq V$. 
			Since $v$ is connected to all elements in $C$, we need to count all subsets $Y \subseteq V$ which have a common neighbor in $C$. This is equivalent to count the number of elements in $\mathcal{N}_G(C)$. Combining Lemma \ref{lem:generatingfunctNcG1} and Lemma \ref{lem:generatingfunctNcG2}, we obtain 
			\[
			\sum_{A \in \mathcal{A}_G(C)} P_G(A,C,x) \left( (1+x)^{|C|}- x^{|A|}(1+x)^{|C|-|A|} \right)
			\]
			as the generating function of $\mathcal{N}_G(C)$. In $X$ there is one additional element~$v$. Hence we have to multiply the polynomial with $x$.
		\end{itemize}
		Since the above cases are disjoint, this leads to the formula of the neighborhood polynomial as stated. 
	\end{proof}
	
	With this formula, we are able to compute the neighborhood polynomial after attaching a vertex $v$ to a clique $C$ in a graph. We will use this in connection with the perfect elimination order of the graph to compute the neighborhood polynomial of chordal graphs. For this we start with a single vertex and add in reverse order of the perfect elimination order the vertex one after another. After every step we compute the neighborhood polynomial using Proposition~\ref{cor:AttachmentClique}. Furthermore we need to make sure to have the correct data to compute the neighborhood polynomial after every step. Hence we need to update the anchor family and periphery polynomials. 
	
	To go into a more detailed analysis we assume that the chordal graph is connected. 
	If the chordal graph is not connected we apply the algorithm for every connected component and compute the neighborhood polynomial of the whole graph by adding the neighborhood polynomials of every connected component (cf. Proposition~\ref{prop:DisjointUnion}). This operation can be done in linear time. 
	
	For a connected chordal graph we compute the elimination order $v_1, \ldots , v_n$ using a lexicographic breadth-first search which has linear runtime. 
	We then add the vertices in reverse order, starting with $v_n$. The neighborhood polynomial of this starting graph is $1$. Then we add $v_i$ to the corresponding clique in $G[v_{i+1}, \ldots, v_n]$ and compute the new neighborhood polynomial with Proposition~\ref{cor:AttachmentClique}.

	In order to compute the formula of Proposition~\ref{cor:AttachmentClique}, we need the anchor family of $C$ and the corresponding periphery polynomials $P_G(A,C,x)$ for every $A \in \mathcal{A}_G(C)$. 
	With Lemma~\ref{lem:InclusionClique} it suffices to store these informations for the maximal cliques of the graph and compute them in every step for the required clique $C$ where we attach the new vertex. 
	The details of updating this information will be explained in the next paragraph. Furthermore we have a list of the maximal cliques in the current graph which need to be updated, too. Since every step gives at most one new maximal clique, there are at most $n$ maximal cliques in a chordal graph with $n$ vertices. 
	 After attaching one vertex, updating the polynomial and the corresponding information, we go on with the next step.  \\
	
	We now study how to update the anchor families and periphery polynomials for the maximal cliques after attaching a vertex in order to have the correct ones in the next step. 
	Fix a clique $C$ of the graph $G$. The graph with attached vertex $v$ to $C$ is denoted by $G^+ = G_{C \triangleright  v}$.   
	We get exactly one new maximal clique $C^+ = C \cup \{v\}$ which we have to add to the list of maximal cliques in the graph. If $C$ is a maximal clique in $G$ we have to delete the entry $C$ from the list of maximal cliques. 
	
	We determine the anchor sets and periphery polynomial of the new arising maximal clique $C^+$.
	The periphery of $C^+$ in $G^+$ is $P_{G^+}(C^+) = P_G(C)$ and the family of anchor sets is $\mathcal{A}_{G^+}(C^+) = \mathcal{A}_{G}(C) \cup \{ C^+\}$ if $C$ was not maximal in $G$ and $\mathcal{A}_{G^+}(C^+) = \mathcal{A}_{G}(C) \backslash \{C\} \cup \{ C^+\}$ if $C$ was maximal in $G$. The periphery polynomials stay as in $G$ that is $P_{G^+}(A,C^+,x) = P_G(A,C,x)$ for all $A \in \mathcal{A}_{G}(C)$ and $P_{G^+}(C^+,C^+,x) = 1$.
	
	Now we go through the list of maximal cliques and update the necessary information if needed. 
	The maximal cliques in $G$ which have no intersection with $C$, do not change in~$G^+$.
	Let $C_{max}$ be a maximal clique in $G$ with $C_{max} \cap C \neq \emptyset$. It is $C \neq C_{max}$. 
	Note that we deleted $C$ from the list in the case that $C$ was maximal in $G$. 
	The periphery of $C_{max}$ in $G^+$ consists of the periphery of $C_{max}$ in $G$ together with the new element $v$. More formally it holds
	\begin{align*}
	P_{G^+}(C_{max})=P_G(C)\cup \{v\}.
	\end{align*}
	Every anchor set of~$C_{max}$ in $G$ remains an anchor set in~$G^+$. 
	Since the new vertex $v$ is attached to the subset $C_{max} \cap C$ of the considered clique $C_{max}$, this subset $C_{max} \cap C \neq \emptyset$ is a new anchor set in $G^+$, if it was not already an anchor set in $G$. 
	Furthermore all subsets of $C_{max}$ occurring as non-empty intersection of $C_{max} \cap C$ with an anchor set in $\mathcal{A}_G(C_{max})$ build an anchor set of $C_{max}$ in~$G^+$. 
	The family of anchor sets of $C_{max}$ in $G^+$ is:
	\begin{align} \label{eq:newanchor}
	&\mathcal{A}_{G^+}(C_{max}) = \\
	&\mathcal{A}_G(C_{max}) 
	\cup \{A \cap (C_{max} \cap C) \mid A \cap (C_{max} \cap C) \neq \emptyset \text{ and } A \in \mathcal{A}_G(C_{max}) \}. \nonumber
	\end{align}
	Note that  $C_{max} \cap C$ is an element of the second set in the above equation since $C_{max} \in \mathcal{A}_G(C_{max})$.
	To determine the periphery polynomial $P_{G^+}(A,C_{max},x)$ for every anchor set $A \in \mathcal{A}_{G^+}(C_{max})$, we need to distinguish the following three cases. 
	Since $C \neq C_{max}$, the intersection $C_{max} \cap C $ is a proper subset of $C_{max}$.
	\begin{enumerate}[(i)]
		\item If $A$ is a proper subset of $C \cap C_{max}$, all corresponding periphery sets in $G$ are a corresponding periphery set in $G^+$ and we can add $v$ to every corresponding periphery set $M$ in $G$, since the intersection with the neighborhood $N_{G^+} (v) = C$ does not change the anchor set. In this case we get:
		\begin{align*}
					P_{G^+}(A,C_{max},x) = (1+x) P_G(A,C_{max},x).
		\end{align*}
		
		\item If $A = C \cap C_{max}$, the periphery sets in $G$ with corresponding anchor set $A$ which are counted in $P_G(A , C_{max} , x)$ still have the same anchor set in $G^+$. 
		Furthermore $v$ is a new periphery set with anchor set $A = C \cap C_{max}$ and all periphery sets which have a superset $A'$ of $A$ as corresponding anchor set, form together with $v$ a periphery set with anchor set $A$. 
		Hence the updated periphery polynomial is	
		\begin{align*}
		&P_{G^+}(A , C_{max} , x) = 
		P_G(A , C_{max} , x) + x \left(1 + \sum_{\substack{A' \supseteq A \\ A' \in \mathcal{A}_{G}(C_{max})} } P_G(A',C_{max},x) \right).
		\end{align*}
		\label{item:updateperiphery}
		\item 
		In the remaining case $A$ is not a subset of $C \cap C_{max}$, hence $v$ is not in a periphery set with anchor set $A$. So the periphery polynomial stays the same, which means 
		\[
		P_{G^+}(A, C_{max},x) = P_G(A,C_{max},x).
		\]
	\end{enumerate}
	
	This concludes the analysis of the cases and hence the algorithm to compute the neighborhood polynomial of a chordal graph.
	This algorithm leads to a polynomial time algorithm if the anchor width is polynomially bounded as stated in Theorem~\ref{thm:runtime}. 
	We now give a detailed analysis of the runtime which concludes the proof of Theorem~\ref{thm:runtime}.
	
	\begin{proof}[of Theorem~\ref{thm:runtime}]
			Using lexicographic breadth-first search, we get a perfect elimination order of the chordal graph $G$ in linear time (cf.~\cite{Golumbic1980}).
			After attaching one vertex, there is one new maximal clique, containing~$v$. We add this maximal clique to the list of all maximal cliques in the graph, possibly removing the neighborhood of $v$ if it was in the list. 
			This shows that we have at most $n$ maximal cliques. 
			Furthermore we have an ordering of the vertices given by the perfect elimination order which is the reverse order of attaching the vertices to the graph. 
			Hence in order to compute an intersection or test a subset relation, we only need to compare the elements in the clique in the attached order. Both is possible in linear time.
			
			For every step we need to update the neighborhood polynomial and afterwards the other informations. In total we attach $n$ vertices and hence have $n$ steps. 
			
			To update the neighborhood polynomial after vertex attachment to a clique $C$, we first need to find a maximal clique, containing $C$ and compute its anchor family and its periphery polynomials. 
			In order to find this maximal clique we test for every maximal clique (at most $n$) whether $C$ is a subset. This needs at most $\mathcal{O}(n^2)$ time.
			
			Computing the anchor family takes $\mathcal{O}(nk)$ and for one anchor set the periphery polynomial takes $\mathcal{O}(nk)$ and since we have at most $k$ anchor sets in a clique, we need $\mathcal{O}(nk^2)$ for this step (cf. Lemma~\ref{lem:InclusionClique}). 
			Now using the update formula to compute the neighborhood polynomial is possible in $\mathcal{O}(k)$.
			Updating the neighborhood polynomial takes for every step at most $O(nk^2)$ time.
			
			Afterwards we update the anchor families and periphery polynomials for all maximal cliques with non-empty intersection with $C$. 
			We have at most $n$ maximal cliques, where we need to update this information. 
			Updating the anchor family for one maximal clique takes $\mathcal{O}(nk)$ time (see (\ref{eq:newanchor}))
			Hence for all maximal cliques this takes at most $\mathcal{O}(n^2k)$.
			For one anchor set, updating the periphery polynomials takes at most $\mathcal{O}(nk)$ in case~(\ref{item:updateperiphery}). This case appears at most once and compute the periphery polynomial in the other cases is possible in linear time. 
			Updating the necessary information for anchor sets and their periphery polynomials leads to a runtime of at most $\mathcal{O}(n^2k)$ for all maximal cliques. 
			Since we add in total $n$ vertices, one after another, this leads to a total runtime of $\mathcal{O}(n^3k + n^2k^2 )$. 
	\end{proof}
	
	\section{Complexity of the  Anchor Width} 
	\label{sec:AnchorWidth}
	In this section, we will discuss some subclasses of chordal graphs and study their anchor width. We show that there are subclasses with polynomially bounded anchor width. We arrived at these graph classes starting from interval graphs, the first class for which we found a polynomial bound.
	For these subclasses the algorithm explained in Section~\ref{sec:anchorDegree} runs in polynomial time.
	In contrast to this result, we show that the anchor width of split graphs, a simple well-known subclass, is not polynomially bounded (see the following proposition). Hence the algorithm introduced in Section \ref{sec:anchorDegree} might take super-polynomial time. \\

	\begin{proposition} \label{prop:splitexponential}
		For all $n \in \mathbb{N}$ there is a split graph on $n$ vertices with anchor width at least $2^{\frac{n}{2}}-1$. 
	\end{proposition}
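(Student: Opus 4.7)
The plan is to exhibit an explicit split graph whose maximal clique has exponentially many anchor sets. Let $k = \lfloor n/2 \rfloor$ and build a split graph $G$ on $n$ vertices whose clique is $C = \{c_1, \dots, c_k\}$ and whose independent set is $I = \{u_1, \dots, u_{n-k}\}$. The crucial edges to add are those between $I$ and $C$: for each $i \in \{1, \dots, k\}$, connect $u_i$ to every vertex of $C$ except $c_i$, so that $N_G(u_i) \cap C = C \setminus \{c_i\}$. Any remaining vertices of $I$ (in the case that $n$ is odd) may be left isolated or attached arbitrarily; they are irrelevant to the argument.

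Next I would verify that every non-empty subset $S \subseteq C$ is an anchor set of $C$. Given such an $S$, define the periphery set
\[
M_S = \{u_i \mid c_i \in C \setminus S\} \subseteq I \subseteq P_G(C).
\]
Since $N_G(u_i) \cap C = C \setminus \{c_i\}$, the common neighborhood of $M_S$ inside $C$ is
\[
A_G(M_S, C) = \bigcap_{i : c_i \notin S}\bigl(C \setminus \{c_i\}\bigr) = C \setminus \{c_i : c_i \notin S\} = S.
\]
In particular $S \in \mathcal{A}_G(C)$, and distinct $S$ obviously give distinct anchor sets. This produces $2^{k} - 1$ anchor sets of $C$, so the anchor width of $G$ is at least $2^{\lfloor n/2 \rfloor} - 1 \geq 2^{n/2}-1$ when $n$ is even, and a direct adjustment (for instance taking one extra vertex of $I$ into the clique, or leaving it isolated) gives the stated bound for odd $n$ as well.

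The argument is essentially a one-line counting once the construction is chosen, so I do not anticipate a significant technical obstacle. The only point that deserves care is checking that $G$ is indeed a split graph (immediate from the partition $V = C \,\dot\cup\, I$) and that $M_S \subseteq P_G(C)$, which follows because each $u_i$ lies in $N_G^{\cup}(C) \setminus C$.
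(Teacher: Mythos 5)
Your construction is exactly the paper's: a clique $c_1,\dots,c_m$ with independent vertices $u_1,\dots,u_m$ where $u_i$ is adjacent to $C\setminus\{c_i\}$, and the observation that every non-empty subset of $C$ arises as the anchor set of the corresponding periphery set. The proposal is correct and follows the same approach (and is, if anything, slightly more careful about odd $n$, which the paper's own proof glosses over by treating only $n=2m$).
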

	
	\begin{proof}
		We construct an infinite family of split graphs $S_m$ on $n = 2m $ vertices such that the anchor width is $2^m-1$.
		We start with a clique $C = \{c_1, \ldots , c_m\}$ of size~$m$ and attach vertices $v_1, \ldots, v_m$  such that every $v_i$ ($1 \leq i \leq m$) is adjacent to $c_j$ for all $j \neq i$.
		This constructed graph is a split graph since $C$ is a clique and $\{v_1, \ldots, v_m \}$ an independent set.
		All vertices $v_i$ are in the periphery $P_C$, hence $|P_C| = m= \frac{n}{2}$.
		For a periphery set $M \subseteq P_C$ we denote by $I_M$ the indices of those vertices $v_i$ which are in $M$. 
		The corresponding anchor set is then $\{C_j \mid j \notin I_M \}$. In such a way we can construct every non-empty subset of $C$ as an anchor set. 	
		Hence the anchor width of $S_m$ is $2^m -1 = 2^{\frac{n}{2}}-1$. 
	\end{proof}

	Another interesting family of subclasses are the chordal graphs with bounded leafage. For those graphs we can show a polynomial upper bound of the anchor width. 
	The leafage is a parameter which stems from the intersection graph representation of chordal graphs.
	An \emph{intersection graph} of a family of sets $\mathcal{F}$ is the graph consisting of one vertex for every set in $\mathcal{F}$ such that two vertices are adjacent if and only if the corresponding sets have a non-empty intersection. 
	An interval graph is an intersection graph of a family of subtrees of a path and
	chordal graphs are exactly the graphs which are the intersection graph of a family of subtrees of a \emph{host tree}~\cite{Gavril1974}.
	We call a representation of a chordal graph by a family of subtrees a \emph{subtree representation}.
	\cite{LinMcKeeWest1998} introduced the leafage of a chordal graph, which measures how close a chordal graph is to an interval graph.
	More precisely, the \emph{leafage} $\ell(G)$ of a chordal graph $G$ is defined as the minimum number of leaves of the host tree among all subtree representations of $G$. 
	We call a subtree representation \emph{optimal} if it has the minimum number of leaves in the host tree.
	The interval graphs are exactly the chordal graphs with leafage at most $2$.
	The split graphs $S_m$ constructed in Proposition~\ref{prop:splitexponential} have leafage $m$ and a possible optimal host tree is the star $K_{1,m}$.
	\cite{HabibStacho2009} present a polynomial-time algorithm in order to compute the leafage of a chordal graph. 
	As mentioned in \cite{LinMcKeeWest1998} we may restrict to host trees whose number of vertices is the number of maximal cliques of $G$. 
	\begin{lemma}
		\label{lem:1-1corresondence}
		There exists an optimal representation such that the vertices of the host tree are in one-to-one correspondence with the maximal cliques of the graph. 
	\end{lemma}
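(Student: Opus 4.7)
The plan is to start with an arbitrary optimal subtree representation and successively contract edges of the host tree until the remaining vertices are in bijection with the maximal cliques of $G$, while controlling the leaf count throughout.

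Let $(T,\{T_v\}_{v\in V(G)})$ be an optimal subtree representation and for each $t\in V(T)$ set $K_t=\{v\in V(G)\mid t\in T_v\}$. Since the subtrees sharing the point $t$ pairwise intersect in $t$, each $K_t$ is a clique of $G$. Conversely, by the Helly property of subtrees in a tree, for every maximal clique $C$ of $G$ the subtrees $\{T_v:v\in C\}$ share a common vertex $t\in V(T)$, whence $K_t\supseteq C$ and maximality forces $K_t=C$. So every maximal clique appears at least once as a $K_t$; what is left is to discard the superfluous tree vertices and the repetitions.

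The decisive observation is that contracting an edge $uw$ of a tree never increases the number of leaves. A short case analysis suffices: if both $u,w$ are internal the merged vertex is internal and the leaf set is untouched; if $u$ is a leaf and $\deg(w)\geq 3$ we lose the leaf $u$ and the merged vertex stays internal; if $u$ is a leaf and $\deg(w)=2$ the merged vertex becomes a leaf but we also lose $u$. Hence any sequence of edge contractions of $T$ yields a host tree of at most the same leafage as $T$. It remains to show that after suitable contractions each remaining tree vertex is a distinct maximal clique, the resulting family of subtrees is still a valid subtree representation of $G$, and every maximal clique is still represented.

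I would proceed in two phases. First, while there is an edge $uw$ of $T$ with $K_u=K_w$, contract it; each $T_v$ either contained both endpoints (and is still connected afterwards) or neither (and is unaffected), so the result is a subtree representation of the same graph. After this phase, distinct tree vertices carry distinct cliques. Second, while there is some $t\in V(T)$ with $K_t$ not maximal, pick any $v\in V(G)\setminus K_t$ such that $K_t\cup\{v\}$ is a clique; then $t\notin T_v$, and if $u$ is the neighbor of $t$ on the path toward $T_v$, every $T_w$ with $w\in K_t$ must pass through $u$ (it contains $t$ and meets $T_v$), so $K_u\supsetneq K_t$; contracting $tu$ preserves the representation and removes $t$. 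Since each contraction reduces $|V(T)|$ the process terminates. The main obstacle, and the step most in need of care, is the second phase: one must verify that every contraction indeed preserves the subtree representation (subtrees containing $t$ but not $u$, or vice versa, must remain connected after identification), and that the existence of a neighbor $u$ with $K_u\supsetneq K_t$ really follows from non-maximality of $K_t$ via the Helly/path argument sketched above. At the end, the surjection from tree vertices to maximal cliques observed initially has become a bijection, and the leafage has not grown.
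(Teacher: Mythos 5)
Your route is essentially the paper's: the Helly property of subtrees guarantees that every maximal clique of $G$ is carried by some host-tree vertex, and then the host tree is shrunk by contractions, which never increase the number of leaves, until only vertices carrying maximal cliques remain and no clique is repeated. Your edge-by-edge implementation of the shrinking is in fact more careful than the paper's one-line ``contract the path from $v$ to $v_{C_{max}}$'': the gate argument showing that the neighbour $u$ of $t$ towards $T_v$ satisfies $K_u\supseteq K_t$ is exactly what guarantees that the contraction creates no new intersections of subtrees, hence no new edges of the represented graph. Note that this, and not connectivity, is the danger to flag when contracting: connectivity of the contracted subtrees is automatic, whereas a subtree through $t$ only and another through $u$ only would become intersecting after the identification; your conditions $K_t=K_w$ (Phase 1) and $K_t\subseteq K_u$ (Phase 2) rule this out.

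There is, however, one step that is wrong as stated: after Phase 1 you claim that distinct tree vertices carry distinct cliques. Removing adjacent pairs with equal cliques does not give this: two non-adjacent vertices $t,t'$ can satisfy $K_t=K_{t'}$ while every intermediate vertex of the $t$--$t'$ path carries a strictly larger clique (host path $t,s,t'$ with $T_a=\{t,s,t'\}$ and $T_b=\{s\}$ already shows the inference fails). Consequently your final sentence, that the surjection has become a bijection, is not yet justified: at the end of Phase 2 every $K_t$ is maximal, but injectivity of $t\mapsto K_t$ still needs an argument. The repair is short: if $K_t=K_{t'}=C$ with $C$ maximal and $t\neq t'$, then every vertex on the $t$--$t'$ path has clique containing $C$, hence equal to $C$ by maximality, so an adjacent pair with equal cliques exists; therefore either check that Phase 2 never creates such adjacent pairs (it cannot, unless one already existed), or simply alternate the two phases until neither applies, which terminates since each contraction decreases $|V(T)|$. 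With that patch the argument is complete and matches the paper's proof, whose own path-contraction step is terser precisely on the point you handle via $K_t\subseteq K_u$.
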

	\begin{proof}
		Since every pairwise intersecting family of subtrees has the {Helly property}, i.e.\ the intersection of all subtrees of a subtree representation of a clique is non-empty~\cite{Golumbic1980}, there is at least one common vertex $v_C$ for every clique $C$ in the host tree.
		A vertex in the host tree cannot belong to different maximal cliques since their union has to form a clique as well and hence the cliques would not be maximal. 
		Furthermore all subtrees intersecting in a vertex $v$ of the host tree build a clique $C$.
		If $C$ is not maximal, there is a maximal clique $C_{max}$ containing $C$. Contracting the path from $v$ to $v_{C_{max}}$ in the host tree does not increase the number of leaves. 
		Thus, if we choose an optimal representation with few vertices as possible, the claim follows. 
	\end{proof}
	
	We study the connection between the anchor width and the leafage of a chordal graph and show an upper bound of the anchor width.
	In the following, we identify a subtree of the host tree by its vertices.\\

	\begin{theorem}\label{thm:leafage}
		For a chordal graph $G$ with leafage $\ell = \ell(G)$ and $n$ vertices, the anchor width is at most $n^{\ell}$.
	\end{theorem}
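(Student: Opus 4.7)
The plan is to reduce the counting to counting subtrees of the host tree that contain a fixed vertex, and use the assumption that the host tree has only $\ell$ leaves.

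By Lemma~\ref{lem:InclusionClique}\,(b) it suffices to bound $|\mathcal{A}_G(C)|$ when $C$ is a maximal clique. Invoking Lemma~\ref{lem:1-1corresondence}, I would fix an optimal subtree representation whose host tree $T$ has $\ell$ leaves and whose vertices correspond bijectively to the maximal cliques of $G$. Each vertex $u \in V(G)$ corresponds to a subtree $T_u \subseteq T$, and $uw \in E$ iff $T_u \cap T_w \neq \emptyset$. Let $t_C$ denote the host-tree vertex associated with $C$; every $w \in C$ satisfies $t_C \in T_w$, while every $u \in P_G(C)$ satisfies $t_C \notin T_u$, so $T_u$ lies entirely in one of the connected components of $T \setminus \{t_C\}$. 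For each such periphery vertex $u$, define $\rho(u)$ to be the vertex of $T_u$ closest to $t_C$ in $T$.

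The key observation that I would prove next is: for every $w \in C$ and every $u \in P_G(C)$,
\[
u \in N_G(w) \iff \rho(u) \in T_w.
\]
The ``$\Leftarrow$'' direction is immediate because $\rho(u) \in T_u \cap T_w$. For ``$\Rightarrow$'', if $T_u \cap T_w$ contains some vertex $z$, the path from $t_C$ to $z$ lies in $T_w$ (since $t_C, z \in T_w$ and $T_w$ is a subtree), and its last segment, from the first vertex of $T_u$ on the path up to $z$, lies in $T_u$; because $\rho(u)$ is the closest vertex of $T_u$ to $t_C$, that first vertex must be $\rho(u)$, so $\rho(u) \in T_w$. Setting $R(M) = \{\rho(u) : u \in M\} \subseteq T \setminus \{t_C\}$, this gives
\[
A_G(M,C) = \{w \in C : R(M) \subseteq T_w\},
\]
which depends only on the minimal subtree $T(M)$ of $T$ containing $R(M) \cup \{t_C\}$ (equivalently, $w$ is in the anchor set iff $T_w \supseteq T(M)$).

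Hence the number of distinct anchor sets of $C$ is bounded by the number of subtrees of $T$ that contain $t_C$. Because $T$ has exactly $\ell$ leaves, $T$ is the union of the $\ell$ paths $P_{\lambda_1}, \dots, P_{\lambda_\ell}$ from $t_C$ to its leaves. Any subtree $T' \subseteq T$ containing $t_C$ is connected and is therefore the union $\bigcup_{i=1}^{\ell} [t_C, x_i]$, where $x_i$ is the furthest vertex of $T'$ from $t_C$ along $P_{\lambda_i}$ (possibly $x_i = t_C$). Thus $T'$ is determined by the tuple $(x_1, \dots, x_\ell) \in V(T)^\ell$. Since $|V(T)|$ equals the number of maximal cliques of $G$, which is at most $n$, there are at most $n^\ell$ such subtrees, and therefore $|\mathcal{A}_G(C)| \leq n^\ell$, proving the theorem.

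The main technical step will be the biconditional $u \in N_G(w) \iff \rho(u) \in T_w$; once that is in hand, the bookkeeping to get the bound $n^\ell$ is short. A minor obstacle is to make sure the $n$ in the bound is genuinely the number of vertices of $G$: this is where Lemma~\ref{lem:1-1corresondence} is indispensable, since it lets one assume $|V(T)| \le n$ rather than a potentially larger quantity.
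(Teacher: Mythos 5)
Your argument is correct and is essentially the paper's own proof: both fix an optimal representation via Lemma~\ref{lem:1-1corresondence}, reduce to a maximal clique, observe that an anchor set is determined by one host-tree vertex on each of the $\ell$ paths from the clique's host-tree vertex to the leaves, and count at most $n^{\ell}$ such tuples. Your intermediate step via the entry points $\rho(u)$ and the subtree $T(M)$ merely makes explicit the adjacency criterion that the paper uses implicitly when defining its per-path representatives $v_i$.
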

	
	\begin{proof}
		Let $C = C_{max}$ be a maximal clique in the graph $G$. We consider an optimal subtree representation of $G$ such that the vertices of the host tree $T$ are in one-to-one correspondence with the maximal cliques of $G$ (cf. Lemma \ref{lem:1-1corresondence}). 
		Let $v_C$ be the vertex in the host tree which corresponds to the clique~$C$ of $G$. 
		From $v_C$ there is a unique path in the host tree to all $\ell$ leaves which we denote by $P_1, \ldots , P_{\ell}$.
		
		For a periphery set $M \subseteq P_C$, 
		the corresponding anchor set consists of those elements of the clique whose neighborhood contains $M$. 
		For every $w \in M$, there is a tree $T_w$ representing $w$ in the subtree configuration. 
		Since $C$ is a maximal clique, these trees $T_w$ do not contain $v_C$ since otherwise $w$ would belong to $C$.
		For every path $P_i$, we define a vertex $v_i$ representing $M$ on $P_i$ as follows:
		\[	v_i \in \argmin_{v \in T_{w_i} \cap P_i} dist(v,v_c),\]
		where $w_i$ is an element from the periphery such that
		\[
		w_i \in \argmax_{w \in M} \min_{v \in T_w \cap P_i} dist(v,v_c).
		\]
		So for every $w \in M$ such that $T_w \cap P_i \neq \emptyset$, we choose the closest vertex $v_w$ to $v_C$ on the path $P_i$ of the corresponding tree $T_w$. Among those vertices $\{v_w\}_w$, the vertex $v_i$ is the vertex with maximal distance to $v_C$.
		If there is no subtree $T_w$ of the periphery which has a non-empty intersection with the path $P_i$, we set $v_i = v_C$. 
		Note that the $v_i$'s are not necessarily distinct.
		
		Now the anchor set $A = A_G(M,C)$ consists exactly of all subtrees of the clique $C$, which contain all $v_1, \ldots, v_{\ell}$ and $v_C$. If there is no such subtree corresponding to an element of the clique, there is no corresponding anchor set to $M$ in $C$. 
		The anchor set $A$ is fully determined by the vertices $v_i$. 
		
		A chordal graph with $n$ vertices has at most $n$ maximal cliques. Hence the host tree has at most $n$ vertices which gives at most $n$ choices for every $v_i$.  
		In total we have at most $n^{\ell}$ choices for the tuple $(v_1, \ldots, v_{\ell})$ and hence at most $n^{\ell}$ different anchor sets. 
		This shows the upper bound for the anchor width. 
	\end{proof}
	
	Since interval graphs are the graphs with leafage at most $2$, it follows:
	\begin{corollary}
		The anchor width of interval graphs on $n$ vertices is at most $n^2$.
	\end{corollary}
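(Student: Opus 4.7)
The statement is an immediate specialization of Theorem~\ref{thm:leafage}, so the plan is essentially a one-line derivation plus a sentence justifying the parameter value. The plan is to invoke the characterization of interval graphs as the chordal graphs admitting a subtree representation whose host tree is a path, i.e.\ the chordal graphs with leafage at most $2$. This fact is stated explicitly in the excerpt just before Theorem~\ref{thm:leafage} and attributed to~\cite{LinMcKeeWest1998}, so it may be cited without further argument.

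Given that, I would argue as follows. Let $G$ be an interval graph on $n$ vertices. Then $\ell(G) \leq 2$. Applying Theorem~\ref{thm:leafage} to $G$ yields that the anchor width of $G$ is at most $n^{\ell(G)} \leq n^{2}$, which is the claimed bound.

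The only mild subtlety is that Theorem~\ref{thm:leafage} is stated with the actual leafage $\ell(G)$ in the exponent, while the corollary uses the weaker bound $\ell(G) \leq 2$. Monotonicity of $n \mapsto n^{\ell}$ in $\ell$ (for $n \geq 1$) handles this, and the degenerate case $n = 0$ is vacuous since there are no cliques. Since the argument is a direct substitution into an already-proved theorem, there is no genuine obstacle; the only thing worth being careful about is that graphs with leafage $\ell(G) \leq 1$ (i.e.\ graphs whose host tree is a single vertex, namely complete graphs) also satisfy the bound, which they do trivially as $n^{2} \geq n^{\ell(G)}$.
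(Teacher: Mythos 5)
Your derivation is correct and is exactly the paper's argument: the corollary is deduced from Theorem~\ref{thm:leafage} together with the fact that interval graphs are precisely the chordal graphs with leafage at most $2$. The extra remarks on monotonicity and degenerate cases are fine but not needed beyond what the paper states.
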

	
	\begin{figure}[tb]
			\centering
		\includegraphics[scale=0.65]{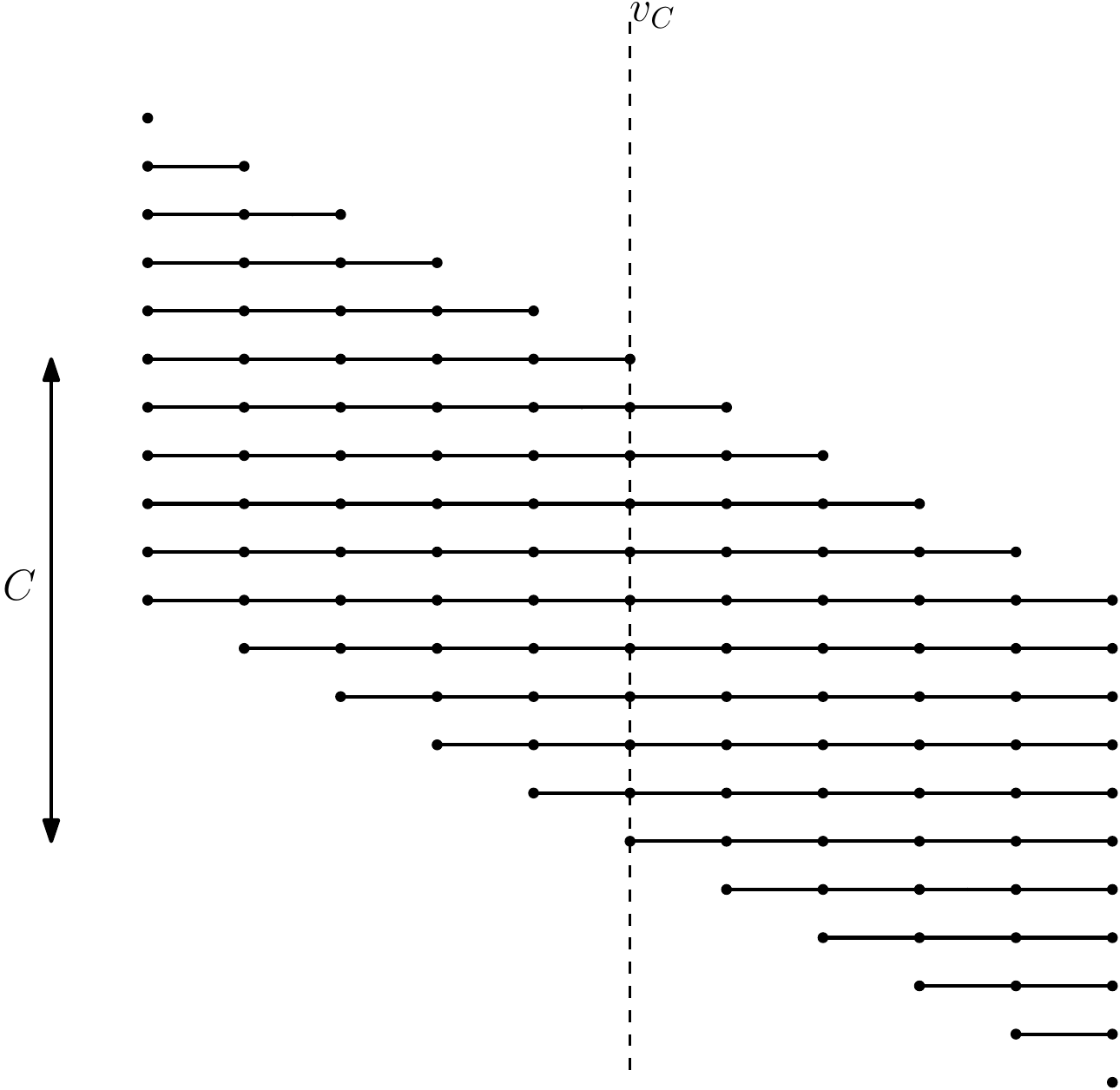}
		\caption{Construction of an interval graph with $21$ 
			vertices and a maximal clique of size $11$ 
			and $25$ anchor sets. }
		\label{fig:IntervalQuadraticBound}	
	\end{figure}

	The magnitude of the bound is optimal for interval graphs since there is an infinite family of interval graphs on $n = 4m +1$ vertices with a clique of size $2m+1$ which has at least $m^2 = \left( \frac{(n-1)}{4}\right)^2$ different anchor sets. 
	For the construction (see Figure \ref{fig:IntervalQuadraticBound}), we take the path $P$ on $2m+1$ vertices $v_{-m}, \ldots, v_0,\ldots,  v_m$ as a host tree. The subtrees corresponding to the clique $C$ are the $2m+1$ paths on the vertices
	\begin{align*}
	\{v_{-m}, \ldots, v_i\} \quad  &\text{ for } \  i = 0, \ldots, m \quad \text{ and } \\
		\{ v_i , \ldots,v_m\} \quad &\text{ for } \ i = -m+1, \ldots, 0.
	\end{align*} 
	The common intersection $v_C$ of the clique is the vertex $v_0$. 
	Furthermore we define the following subpaths, which are in the periphery of $C$:
	\begin{align*}
	\{v_{-m}, \ldots , v_i\} \quad &\text{ for } \ i = -m, \ldots , -1  \quad \text{and} \\
	\{v_i , \ldots, v_m \} \quad &\text{ for } \ i = 1 , \ldots, m.
	\end{align*}
	For every choice of $i \in \{-m ,\ldots, -1\}$ and $j \in \{1, \ldots, m \}$, we consider the two paths:
	\begin{align*}
		\{v_{-m}, \ldots , v_i\} \quad \text{ and } \quad \{v_j , \ldots ,v_m \}
	\end{align*}
	of the periphery. The anchor set corresponding to this two-element periphery 
	set consists of all paths in the host tree
	corresponding to a clique vertex which contain $v_i$ and $v_j$.
	For every choice of $i$ and $j$ these anchor sets differ. Hence there are at least $m^2$ anchor sets.

		Another interesting subclass are the chordal comparability graphs. For these graphs we show a linear bound on the anchor width.
	A graph $G = (V,E)$ is a \emph{comparability graph} if there is a poset $(V,\prec)$ such that two vertices $u,v \in V $ are adjacent in $G$ if and only if $u\prec v$ or $v \prec u $. \\
	
	\begin{proposition}\label{prop:comparability}
		The anchor width of a chordal comparability graph with $n$ vertices is at most~$2n$.
	\end{proposition}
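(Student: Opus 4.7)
Let $G = (V,E)$ be a chordal comparability graph with transitive orientation~$\prec$, and let $C$ be a maximal clique of~$G$; I will focus on maximal cliques, since for non-maximal cliques the anchor sets are controlled via Lemma~\ref{lem:InclusionClique}. Because $C$ is a clique in a comparability graph it is a chain, so I may enumerate its elements $c_1 \prec c_2 \prec \cdots \prec c_k$.

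The plan has two ingredients. First, I would show that every anchor set of $C$ has the very restricted form ``prefix $\cup$ suffix'' of this chain. For a periphery vertex $w \in P_G(C)$, a vertex $c_i$ is adjacent to $w$ iff it is $\prec$-comparable to $w$, so $N_G(w) \cap C = \{c_1,\ldots,c_{\alpha(w)}\} \cup \{c_{\beta(w)},\ldots,c_k\}$ for some $0 \leq \alpha(w) < \beta(w) \leq k+1$; the ``incomparability gap'' $\{c_{\alpha(w)+1},\ldots,c_{\beta(w)-1}\}$ is an interval. The key structural lemma is that chordality forces any two such gaps to overlap or be adjacent. Indeed, if $w_1, w_2 \in P_G(C)$ have gaps separated by some $c_x$ belonging to neither, then $w_1 \prec c_x \prec w_2$ by transitivity (after orienting so that $w_1$'s gap lies to the left), hence $w_1w_2 \in E$; choosing $c_y$ in the gap of $w_1$ and $c_z$ in that of $w_2$, the set $\{w_1,w_2,c_y,c_z\}$ then induces a four-cycle $w_1\,c_z\,w_2\,c_y\,w_1$, since $c_yc_z$ is an edge of $C$ but $w_1c_y$ and $w_2c_z$ are non-edges by the choice of gaps. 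This contradicts chordality. Consequently, for any $M \subseteq P_G(C)$ the union of the gaps is again a single interval, and
\[
A_G(M,C) \;=\; \{c_1,\ldots,c_a\}\cup\{c_b,\ldots,c_k\}, \qquad a = \min_{w \in M}\alpha(w),\ b = \max_{w \in M}\beta(w).
\]
In particular, every anchor set of $C$ is determined by the pair $(a,b)$.

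Second, I would bound the number of realizable pairs $(a,b)$. The value $a$ must equal $\alpha(w)$ for some witness $w \in P_G(C)$, and $b$ must equal $\beta(w')$ for some witness $w' \in P_G(C)$; thus the set of possible prefix endpoints and the set of possible suffix starts each have size at most $|P_G(C)| \leq n$. By charging each anchor set to the periphery vertex witnessing either its prefix end or its suffix start, and accounting for the trivial anchor $C$ itself, one obtains the bound $2n$.

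The main obstacle will be the four-cycle argument: one must carefully select the clique vertices $c_y, c_z$ within the respective gaps and use transitivity across $c_x$ to force the edge $w_1w_2$, so that the four vertices in question induce a genuine chordless four-cycle despite all the other edges already present inside~$C$. Once that structural reduction to the ``prefix $\cup$ suffix'' form is in place, the counting is essentially bookkeeping on $\alpha$- and $\beta$-values.
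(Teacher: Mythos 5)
The structural half of your plan is sound, and in fact more careful than a casual reading suggests: each periphery vertex does see a prefix-plus-suffix of the chain, your gap-overlap lemma is exactly what the induced-$C_4$ argument gives, and the identity $A_G(M,C)=\{c_1,\ldots,c_a\}\cup\{c_b,\ldots,c_k\}$ with $a=\min_{w\in M}\alpha(w)$, $b=\max_{w\in M}\beta(w)$ follows correctly from it. (Minor slip: the induced four-cycle is $w_1,w_2,c_y,c_z$ in this cyclic order, with non-edges $w_1c_y$ and $w_2c_z$; the cycle $w_1\,c_z\,w_2\,c_y$ you wrote uses the non-edge $c_zw_2$ and has the chord $c_yc_z$.) The genuine gap is the counting step. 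Knowing that $a$ is some $\alpha(w)$ and $b$ is some $\beta(w')$ bounds the number of possible values of $a$ and of $b$ by $n$ each, but not the number of realizable \emph{pairs} $(a,b)$: one periphery vertex can serve as the $\alpha$-witness of many anchor sets with different suffix starts, so charging each anchor set to a single witness is not an injection into a set of size $2n$. Concretely, take the chain $c_1\prec\cdots\prec c_m$ with $m=2p+1$ and pairwise incomparable vertices $w_1,\ldots,w_p$ with $c_a\prec w_i$ exactly for $a\le i$ and $w_i\prec c_b$ exactly for $b\ge p+1+i$. This is a transitive orientation, the graph is chordal (an induced cycle of length at least four would need two nonadjacent $w$'s and two clique vertices, which are adjacent), and $C=\{c_1,\ldots,c_m\}$ is a maximal clique; all gaps $\{c_{i+1},\ldots,c_{p+i}\}$ contain $c_{p+1}$, so your lemma is satisfied, yet $A_G(\{w_i: i\in I\},C)=\{c_1,\ldots,c_{\min I}\}\cup\{c_{p+1+\max I},\ldots,c_m\}$, giving $\binom{p}{2}+p$ pairwise distinct anchor sets on $n=3p+1$ vertices. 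So the final step is not bookkeeping: no charging of the kind you describe can yield a linear bound from the prefix-union-suffix structure alone.

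It is worth comparing this with the paper's own argument, which diverges exactly where yours stalls. The paper claims every anchor set is a prefix $\{c_1,\ldots,c_j\}$ or a suffix $\{c_i,\ldots,c_m\}$ of the chain, which immediately gives the count $2m-1\le 2n$; the induced-$C_4$ argument (the same one you use) is invoked to rule out an anchor set that is cut nontrivially at both ends by two witnesses $v\prec c_i$ and $w\succ c_j$. That argument requires $v\neq w$, and it says nothing about a single ``sandwiched'' periphery vertex with $c_a\prec u\prec c_b$ and an incomparability gap in between (already realized by $K_4$ minus an edge), which is precisely the configuration your analysis keeps track of and which the family above exploits: there the anchor sets are neither prefixes nor suffixes. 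So your structural lemma is the accurate description of what chordality plus comparability give, but it is strictly weaker than the dichotomy the paper asserts, and bridging that difference (or excluding sandwiched vertices, which cannot be done in general) is the missing idea; as your plan stands, the bound $2n$ does not follow.
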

	
		\begin{figure}[tb]
		\centering
		\includegraphics[page = 4,scale=0.65]{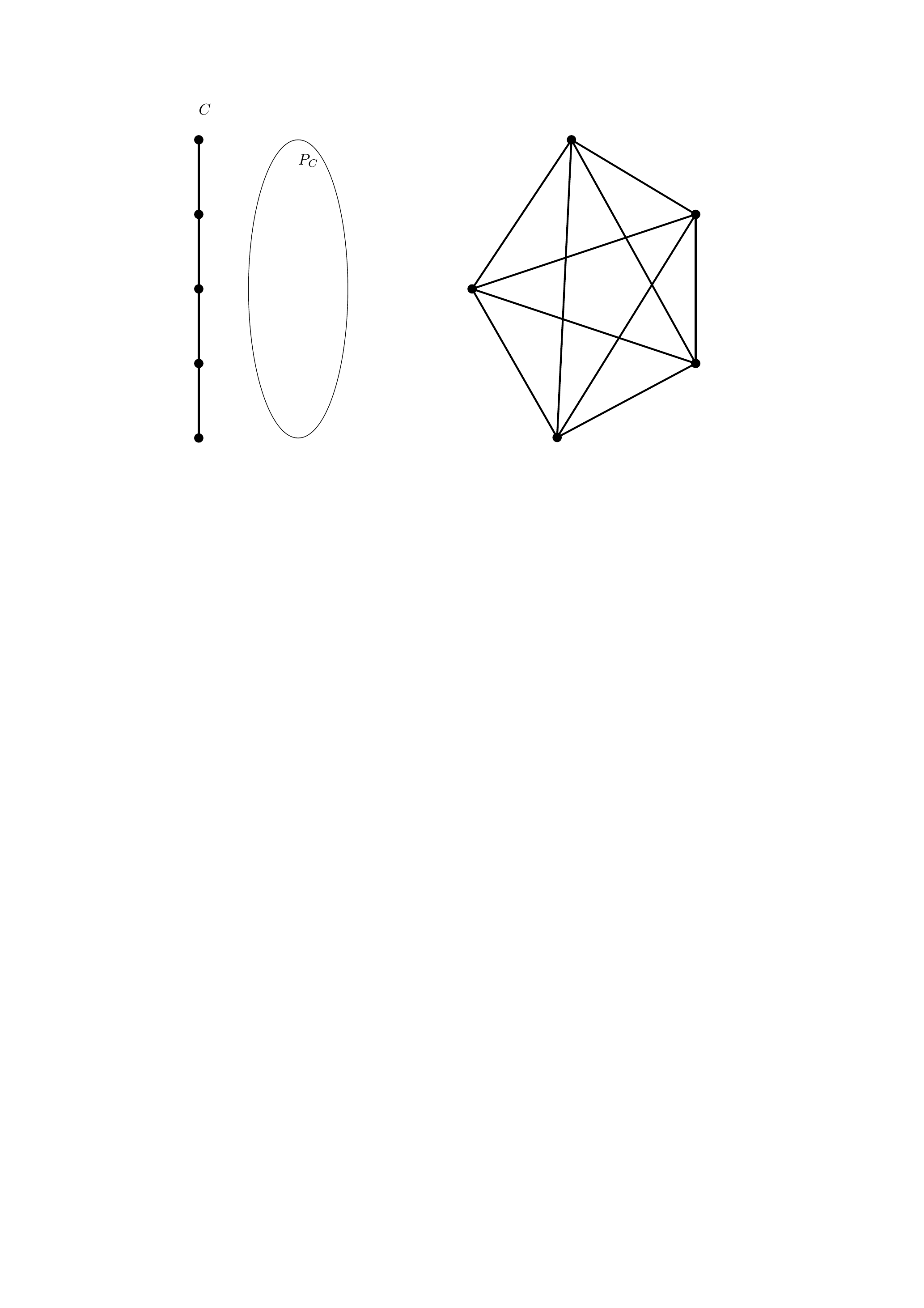} 
		\caption{Hasse diagram of a poset corresponding to a comparability graph with induced $C_4$ which gives a contradiction in the proof of Proposition \ref{prop:comparability};}
		\label{fig:ComparabilityGraph} 
	\end{figure}
	
	\begin{proof}
		Let $G$ be a chordal comparability graph with corresponding poset $(V, \prec)$. 
		Consider a maximal clique $C_{max} = \{c_1, \ldots ,c_m \}$ in $G$. 
		A clique in the graph corresponds to a chain in the poset. 
		Hence the maximal clique $C_{max}$ of size $m$ corresponds to a maximal chain $c_1 \prec \ldots \prec c_m$ of length $m-1$ in the poset.
		So whenever there is a vertex $v \notin C_{max}$, which is adjacent to $c_i \in C_{max}$ such that $v \prec c_i$ then $v $ is adjacent to $c_k$ for all $k \geq i$. Let $i$ be the minimal element of the clique such that $v \prec c_i$. 
		Since the clique is maximal, $i >1$ and $v$ is not comparable to $c_{i-1}$.
		Similar we get for every vertex $w \notin C_{max}$ which is connected to a vertex $c_j$ of the clique with $w  \succ c_j$ that $w$ is connected to all elements $c_k$ of the clique with $k \leq j$.
		Let $c_j$ be the maximal element of the clique connected to $w$, then $j <m$ and $c_{j+1}$ is not comparable to $w$. 
		Hence an anchor set in $C_{max}$ is a chain of the form $c_i \prec c_{i+1} \prec \ldots \prec c_{j-1} \prec c_j$.
		Assume there is an anchor set with $1< i <j < m$ and vertices $v$ and $w$ such that $v \prec c_i$ and $w \succ c_j$.
		Then $v$ and $w$ are connected by an edge since $w \succ c_j \succ c_i \succ v$ holds. And since $w$ and $c_{j+1}$ do not share an edge and analogously $v $ and $c_{i-1}$, we get an induced cycle of length $4$ which is not possible since the graph is chordal. In Figure \ref{fig:ComparabilityGraph} the poset is illustrated by its Hasse diagram and gives an illustration of the contradiction. 
		This shows that all anchor sets of $C_{max}$ are of the form $c_1 \prec \ldots \prec c_{j-1} \prec c_j$ for $j \leq m$ or $c_i \prec c_{i+1} \prec \ldots \prec  c_m$ for $i \geq 1$. We have at most $2m-1\leq 2n$ possibilities for those sets.
	\end{proof}

	\section{Discussion}
	In this paper we studied an algorithm for computing the neighborhood polynomial of chordal graphs, which is in general an \NP-hard problem. The runtime of the algorithm depends on the introduced parameter anchor width. If the anchor width of a subclass of chordal graphs is bounded, we have a polynomial-time algorithm to compute the neighborhood polynomial. 
	In Section~\ref{sec:AnchorWidth} we investigated some subclasses and showed that the anchor width is bounded for chordal graphs with bounded leafage and chordal comparability graphs. 
	Furthermore we showed that the anchor width is not bounded for split graphs. 
	It would be interesting to get further subclasses of chordal graphs with bounded anchor width. 
	It might be possible to give an upper bound for the anchor width using the asteroidal number. In \cite{LinMcKeeWest1998} it is shown that the leafage is an upper bound for the asteroidal number for all chordal graphs and they coincide for chordal graphs whose host tree is a subdivision of $K_{1,n}$ as shown in \cite{Prisner92}.
	Furthermore an infinite family of graphs similar to the one for interval graphs, which shows that the magnitude of the upper bound is optimal, would be interesting. 
	
	On top of that there might be other problems on chordal graphs which are hard in general  but polynomial solvable on those subclasses with bounded anchor width. 
	One natural candidate would be graph isomorphism, which is known to be graph-isomorphism-complete on general chordal graphs \cite{LuekerBooth1979} but has recently been shown to be solvable in polynomial time for chordal graphs of bounded leafage \cite{Zeman21}.
	 
	\bibliographystyle{abbrvnat}

	\bibliography{literature}

\begin{thebibliography}{16}
\providecommand{\natexlab}[1]{#1}
\providecommand{\url}[1]{\texttt{#1}}
\expandafter\ifx\csname urlstyle\endcsname\relax
  \providecommand{\doi}[1]{doi: #1}\else
  \providecommand{\doi}{doi: \begingroup \urlstyle{rm}\Url}\fi

\bibitem[Alipour and Tittmann(2021)]{AlipourTittmann2021}
M.~Alipour and P.~Tittmann.
\newblock Graph operations and neighborhood polynomials.
\newblock \emph{Discussiones Mathematicae Graph Theory}, 41\penalty0
  (3):\penalty0 697--711, 2021.
\newblock \doi{10.7151/dmgt.2347}.

\bibitem[Arvind et~al.(2021)Arvind, Nedela, Ponomarenko, and Zeman]{Zeman21}
V.~Arvind, R.~Nedela, I.~Ponomarenko, and P.~Zeman.
\newblock Testing isomorphism of chordal graphs of bounded leafage is
  fixed-parameter tractable.
\newblock \href{https://arxiv.org/abs/2107.10689}{arXiv:2107.10689}, 2021.

\bibitem[Bergold et~al.(2021)Bergold, Hochst{\"{a}}ttler, and Mayer]{WADS}
H.~Bergold, W.~Hochst{\"{a}}ttler, and U.~Mayer.
\newblock The neighborhood polynomial of chordal graphs.
\newblock In \emph{Algorithms and Data Structures - 17th International
  Symposium, {WADS} 2021}, volume 12808 of \emph{LNCS}, pages 158--171.
  Springer, 2021.
\newblock \doi{10.1007/978-3-030-83508-8_12}.

\bibitem[Bertossi(1984)]{Bertossi1984}
A.~A. Bertossi.
\newblock Dominating sets for split and bipartite graphs.
\newblock \emph{Information Processing Letters}, 19\penalty0 (1):\penalty0
  37--40, 1984.
\newblock \doi{10.1016/0020-0190(84)90126-1}.

\bibitem[Booth and Johnson(1982)]{BoothJohnson1982}
K.~S. Booth and J.~H. Johnson.
\newblock Dominating sets in chordal graphs.
\newblock \emph{SIAM Journal on Computing}, 11\penalty0 (1):\penalty0 191--199,
  1982.
\newblock \doi{10.1137/0211015}.

\bibitem[Brown and Nowakowski(2008)]{BrownNowakowski2008}
J.~I. Brown and R.~J. Nowakowski.
\newblock The neighbourhood polynomial of a graph.
\newblock \emph{Australasian Journal of Combinatorics}, 42:\penalty0 55--68,
  2008.

\bibitem[Corneil et~al.(1985)Corneil, Perl, and
  Stewart]{CorneilPerlStewart1985}
D.~G. Corneil, Y.~Perl, and L.~K. Stewart.
\newblock A linear recognition algorithm for cographs.
\newblock \emph{SIAM Journal on Computing}, 14\penalty0 (4):\penalty0 926--934,
  1985.
\newblock \doi{10.1137/0214065}.

\bibitem[Day(2017)]{Day2017}
D.~Day.
\newblock On the neighbourhood polynomial, 2017.
\newblock URL
  \url{https://dalspace.library.dal.ca/bitstream/handle/10222/72816/Day-Dylan-MSc-MATH-March-2017.pdf}.
\newblock Last access on 24/04/2022.

\bibitem[Gavril(1974)]{Gavril1974}
F.~Gavril.
\newblock The intersection graphs of subtrees in trees are exactly the chordal
  graphs.
\newblock \emph{Journal of Combinatorial Theory, Series B}, 16\penalty0
  (1):\penalty0 47--56, 1974.
\newblock \doi{10.1016/0095-8956}.

\bibitem[Golumbic(1980)]{Golumbic1980}
M.~C. Golumbic.
\newblock \emph{Algorithmic Graph Theory and Perfect Graphs}.
\newblock 1980.
\newblock ISBN 978-0-12-289260-8.
\newblock \doi{10.1016/C2013-0-10739-8}.

\bibitem[Habib and Stacho(2009)]{HabibStacho2009}
M.~Habib and J.~Stacho.
\newblock Polynomial-time algorithm for the leafage of chordal graphs.
\newblock In \emph{Algorithms - {ESA} 2009, 17th Annual European Symposium},
  volume 5757 of \emph{LNCS}, pages 290--300. Springer, 2009.
\newblock \doi{10.1007/978-3-642-04128-0_27}.

\bibitem[Heinrich and Tittmann(2018)]{HeinrichTittmann2018}
I.~Heinrich and P.~Tittmann.
\newblock Neighborhood and domination polynomials of graphs.
\newblock \emph{Graphs and Combinatorics}, 34:\penalty0 1203--1216, 2018.
\newblock \doi{10.1007/s00373-018-1968-7}.

\bibitem[Lin et~al.(1998)Lin, McKee, and West]{LinMcKeeWest1998}
I.~Lin, T.~A. McKee, and D.~B. West.
\newblock The leafage of a chordal graph.
\newblock \emph{Discussiones Mathematicae Graph Theory}, 18\penalty0
  (1):\penalty0 23--48, 1998.
\newblock \doi{10.7151/dmgt.1061}.

\bibitem[Lov\'asz(1978)]{Lovasz1978}
L.~Lov\'asz.
\newblock Kneser's conjecture, chromatic number, and homotopy.
\newblock \emph{Journal of Combinatorial Theory, Series A}, 25\penalty0
  (3):\penalty0 319--324, 1978.
\newblock \doi{10.1016/0097-3165(78)90022-5}.

\bibitem[Lueker and Booth(1979)]{LuekerBooth1979}
G.~S. Lueker and K.~S. Booth.
\newblock A linear time algorithm for deciding interval graph isomorphism.
\newblock \emph{Journal of the ACM}, 26\penalty0 (2):\penalty0 183--195, 1979.
\newblock \doi{10.1145/322123.322125}.

\bibitem[Prisner(1992)]{Prisner92}
E.~Prisner.
\newblock Representing triangulated graphs in stars.
\newblock \emph{Abhandlungen aus dem Mathematischen Seminar der Universit\"{a}t
  Hamburg}, 62:\penalty0 29--41, 1992.
\newblock \doi{10.1007/BF02941616}.

\end{thebibliography}
	\label{sec:biblio}
\end{document}